\DeclareMathOperator{\Hom}{Hom}  
\newcommand{\ignore}[1]{}
\newtheorem{lemma}{Lemma}
\newtheorem{theorem}{Theorem}
\newtheorem{proposition}{Proposition}
\newtheorem{corollary}{Corollary}
\theoremstyle{definition}
\newtheorem{definition}{Definition}
\newtheorem{example}{Example}
\newtheorem{remark}{Remark}
\keywords{Higher degree forms, diagonal forms, round forms, Witt ring.}
\subjclass[2020]{Primary: 11E76; Secondary: 11E04, 12E05}
\title{A closer look at Witt rings for forms of higher degree}
\author{S. Pumpl\"un}
\email{susanne.pumpluen@nottingham.ac.uk}
\address{School of Mathematics\\
 University of Nottingham\\
 University Park\\
 Nottingham NG7 2RD\\
 United Kingdom }
\begin{document}

\maketitle

\begin{abstract}
  Witt rings for nondegenerate forms $\varphi$ of  degree $d$ over a field
of characteristic 0 or greater than $d$ were defined by Harrison and Pareigis in 1988. We revisit and discuss their definition in light of the developments in the theory of forms of higher degree that were made over the past years. We classify the $H$-forms employed in their definition, and propose to define Witt rings of diagonal forms of degree $d$.  We investigate some special cases; two different types of Witt rings for nondegenerate forms $\varphi$ of  degree $d$.
\end{abstract}

\section*{Introduction}

Let $k$ be a field of characteristic $0$ or greater than $d$.
Let $P_d(k)$ be the set of isomorphism classes of nondegenerate  forms of degree $d$ over $k$.
Together with the direct sum and the tensor product, $P_d(k)$ is a commutative semiring
with unit element  $\langle 1\rangle$.  The Grothendieck ring $ \widehat{W}_d(k)$
associated to $P_d(k)$ is called the {\it Witt-Grothendieck ring of forms of degree $d$}
over $k$, and is a commutative $k$-algebra with unit element $\langle 1\rangle$.
 The Witt-Grothendiek ring of nondegenerate forms of degree $d\geq 3$ over $k$ was defined by Harrison \cite{H}. Subsequently, Harrison and Pareigis constructed Witt rings of nondegenerate forms of higher degree $d$  over $k$  \cite{H-P}.
These rings depend on a fixed finite subgroup $H$ of $k^\times/k^{\times d}$.
Analogues of hyperbolic spaces
depending on $H$, called {\it $H$-spaces}, are  nondegenerate forms $\varphi$
of degree $d$ satisfying $a\varphi\cong\varphi$ for all $a\in H$. With the help of these $H$-spaces
an equivalence relation $\sim_H$ is imposed on  $P_d(k)$, and the resulting commutative unital ring $P_d(k)/\sim_H$
 is called the {\it Witt ring} $W_d(k,H)$.
Its addition is given by the orthogonal sum and its multiplication by the tensor product.
The Witt rings $W_d(k,H)$ thus obtained display some nice functorial properties.

Since Harrison and Pareigis defined the Witt ring $W_d(k,H)$ of higher degree forms, the theory of forms of higher degree has seen some exciting developments, e.g. see \cite{HLYZ, HLHZ, K1, K2, OR1, Pu5, Sax, SOR1, SOR2},  which for instance allow us to classify the $H$-forms used in their definition. 
 We now know that the centre algebra of almost every form is $k$, which means that almost all higher degree forms are absolutely indecomposable
 \cite{HLYZ, HLHZ}. A promising approach thus seems to be to focus on a Witt ring of diagonal forms
(``it is obvious that diagonalizable
forms enjoy nice algebro-geometric properties. Specifically, they are smooth and this
imposes very strong restriction, namely the semisimplicity, on their centers'' \cite{HLYZ}), and we will discuss this possibility.

 Our second goal is to define a  Witt ring for nondegenerate forms of higher degree $d$ over a field $k$ which, for $d=2$,
is the classical Witt ring of quadratic forms. The problem here is how to define
``hyperbolic spaces of higher degree''. The well-known situation in the degree 2 case is a very simplified one:
the ideal generated by the hyperbolic quadratic spaces in the Witt-Grothendiek ring $\widehat{W}(k)$ is exactly
the subgroup generated by hyperbolic planes. Quadratic hyperbolic spaces are extremely well-behaved: they remain hyperbolic
under arbitrary base field extensions (which allows us to define the canonical base change homomorphism
$W(k)\to W(l)$ for field extensions $l/k$), the transfer of a hyperbolic space again is a hyperbolic space
(which allows us to define  $s_*:W(l)\rightarrow W(k)$ for any non-zero $k$-linear map $s:l\rightarrow k$), and they are obviously
all even-dimensional (which allows us to canonically define
${\rm dim}:W(k)\rightarrow \mathbb{Z}$ and to study the fundamental ideal). Moreover, their discriminant is always 1.
Ideally, one would want hyperbolic forms of higher degree which satisfy all - or at least some -
of the above properties, provided they can be translated in a sensible way to the setting of higher degree forms.
 This, however, seems to be hard to achieve and we might have to settle for several different ``Witt rings'' now
instead of simply one. We look into several different ways to define an equivalence relation on the
semiring $P_d(k)$ of isomorphism classes of nondegenerate forms of degree $d$ over $k$ to obtain such rings.

 We begin by studying the different $H$-forms possible and more closely investigate $W_d(k,H )$ for the two special cases that $H=k^\times/k^{\times d}$ is maximal and that $d$ is even and $H=\{1,-1\}$ in Section \ref{sec:H-forms}.
  In Section \ref{sec:Witt ring}, we extend the definition of
$H$-forms given in \cite{H-P} to include the case that $H$ is an infinite subgroup. This leads to the definition of a Witt semiring  $W_d(k,H )$.
Only forms $\varphi$ with finite index $[H:G_k(\varphi)]$ have an additive inverse in $W_d(k,H )$.
In Section \ref{sec:4}, we define Witt semirings of diagonal forms and invariants of diagonal forms and conclude that
 the definition of Witt rings using $H$-equivalence  seems to give good results for
certain fields, if we only study diagonal cubic forms. We conclude with some brief observations on the Witt ring of separable forms and invariants of separable forms defined in \cite{R}
in Section \ref{sec:5}, before moving on to propose several variations of a Witt ring definition in Section \ref{sec:I-forms} and their pros and cons.

The situation is rather complex. For instance,
if one tries to define ``hyperbolic'' forms of degree $d\geq 3$ as a certain class of forms $\varphi$ which satisfy
 $G_l(\varphi)=\{a\in
l^\times\mid\varphi \cong a\varphi\}=l^\times$ for all field extensions $l$ of $k$, then
the tensor product of a hyperbolic form with any other form
is round and universal for all field extensions of $k$ and the sum of a hyperbolic
form with any other is still universal for all these field extensions. However, then the trace of a hyperbolic form $\varphi$
over $l$ is
not necessarily hyperbolic.
And, while we can construct plenty of these forms, there are no  diagonal forms of dimension greater than 1,
which satisfy $G_l(\varphi)=l^\times$ for all field extensions $l$ \cite{Pu1}.

If we relax this requirement and ask  the ``hyperbolic'' forms to only satisfy $G_k(\varphi)=k^\times$, then,
 if $k^\times/k^{\times d}$ is finite, there are diagonal round forms which
satisfy $G_k(\varphi)=k^\times$, and we can use these or a subclass of them to define
hyperbolic {\it diagonal} forms of higher degree and correspondingly, a Witt ring.
 (For infinite groups $k^\times/k^{\times d}$,
there are again no {\it diagonal} forms which satisfy $G(\varphi)=k^\times$.)
This leads us back to the definition by Harrison and Pareigis: For
 $H=k^\times/k^{\times d}$, the $H$-forms  are exactly the round and universal forms over $k$.

 Nonetheless, there seem to be several disadvantages to Harrison and Pareigis's approach. The most striking perhaps is the fact that
in case  $d=2$, the classical Witt ring $W(k)$
of quadratic forms over $k$ in general does not coincide with any Witt ring of the kind $W_2(k,H)$.
 More precisely, there is a canonical surjective
ring homomorphism $W(k)\to W_2(k,H)$ which has as its kernel the annihilator
${\rm Ann}_{W(k)}\{\langle 1\rangle- \langle h\rangle \,|\, h\in H\}$, because
 hyperbolic quadratic forms are always quadratic $H$-forms. However,
so are certain anisotropic quadratic forms.

When dealing with forms of degree $d$, we will always consider nondegenerate forms over fields of  characteristic 0 or $> d$.

\section{Preliminaries}

\subsection{} A $d$-{\it linear form} over $k$ is a $k$-multilinear map $\theta : V \times
\dots \times V \rightarrow k$ ($d$-copies) on a finite-dimensional vector space $V$ over $k$ which is {\it symmetric};
 i.e., $\theta (v_1,\dots, v_d)$ is invariant under all permutations of its variables.
A {\it form of degree $d$} over $k$ is a map $\varphi:V\rightarrow k$  on a finite-dimensional vector space $V$ over $k$
 such that $\varphi(a v)=a^d\varphi(v)$ for all $a\in k$, $v\in V$ and such that the map $\theta : V \times
\dots \times V \rightarrow k$,
 $$\theta(v_1,\dots,v_d)=\frac{1}{d!} \sum_{1\leq i_1< \dots<i_l\leq d}(-1)^{d-l}\varphi(v_{i_1}+ \dots +v_{i_l})$$
($1\leq l\leq d$) is a $d$-linear form over $k$. The {\it dimension} of $\varphi$ is defined as ${\rm dim}\,\varphi={\rm dim}\, V$.
By fixing a basis $\{e_1,\dots,e_n\}$ of $V$ any form $\varphi:V\rightarrow k$ of degree $d$ can be viewed as a homogeneous polynomial
of degree $d$ in $n={\rm dim}\, V$ variables $x_1,\dots,x_n$
 via $\varphi(x_1,\dots,x_n)=\varphi(x_1e_1+\dots+x_ne_n)$.
Conversely, any homogeneous polynomial of degree $d$ over $k$ is a form of degree $d$ over $k$.
Any $d$-linear form $\theta : V \times \dots \times V \rightarrow k$ induces a form $\varphi: V\rightarrow k$ of degree $d$ via
$\varphi (v)=\theta(v,\dots,v)$.
We can therefore identify $d$-linear forms and forms of degree $d$.

\subsection{}
Two $d$-linear spaces $(V_i,\theta_i)$, $i=1,2$, are called {\it isomorphic} (written
$(V_1,\theta_1)\cong (V_2,\theta_2)$ or just $\theta_1\cong\theta_2$), if there exists a bijective linear map
$f:V_1\rightarrow V_2$ such that $\theta_2(f(v_1),\dots,f(v_d))=\theta_1(v_1,\dots,v_d)$ for all $v_1,\dots,v_d\in V_1.$
A $d$-linear space $(V,\theta)$ (or $\theta$ itself) is called {\it nondegenerate}, if $v = 0$ is the only vector such
that $\theta (v, v_{2}, \dots, v_d) = 0$ for all $v_i \in V$.  $\theta$ is
nondegenerate if and only if $\theta (v, w, \dots, w) = 0$ for all $w \in V$ implies $v=0$ \cite[p.~125]{H}.
 A form  $\varphi$ of degree $d$ is called {\it nondegenerate}, if its associated $d$-linear form is  nondegenerate.
 $\varphi$ is \emph{isotropic} if $\varphi(x)=0$ for some nonzero element $x\in V$, otherwise it is called
\emph{anisotropic}.
 From now on we will exclusively deal with nondegenerate forms.

\subsection{}
The {\it orthogonal sum} $(V_1,\theta_1)\perp (V_2,\theta_2)$ of two $d$-linear spaces $(V_i,\theta_i)$, $i=1,2$, is defined
to be the $k$-vector space $V_1\oplus V_2$ together with the $d$-linear form
$$(\theta_1 \perp\theta_2)(u_1+v_1,\dots,u_d+v_d)=\theta_1(u_1,\dots,u_d)+\theta_2(v_1,\dots,v_d)$$
($u_i\in V_1$, $v_i\in V_2$).
The {\it tensor product} $(V_1,\theta_1)\otimes (V_2,\theta_2)$ is defined to be the $k$-vector
space $V_1\otimes V_2$ together with the $d$-linear form
$$(\theta_1 \otimes \theta_2)(u_1\otimes v_1,\dots,u_d\otimes v_d)=\theta_1(u_1,\dots,u_d)\cdot\theta_2(v_1,\dots,v_d)$$
 \cite{H-P}. A $d$-linear space $(V,\theta)$ is
called {\it decomposable} if $(V,\theta)\cong (V_1,\theta_1)\perp (V_2,\theta_2)$ for two nonzero $d$-linear spaces
$(V_i,\theta_i)$, $i=1,2$. A $d$-linear space $(V,\theta)$ is called {\it indecomposable} if it is not
decomposable. We distinguish between indecomposable
ones and {\it absolutely indecomposable} ones; i.e., $d$-linear spaces which stay indecomposable under each
algebraic field extension.

There exists a Krull-Schmidt Theorem for nondegenerate $d$-linear spaces, if $d \geq 3$: let $(V, \theta)$ be a nondegenerate
$d$-linear space over $k$. Then $(V, \theta)$ has a decomposition as a direct sum of nondegenerate indecomposable
$d$-linear spaces which is unique up to order and isomorphism \cite[2.3]{H}.

 If we can write $\varphi$ as $a_1x_1^d+\ldots +a_mx_m^d$, we use the notation $\varphi=\langle  a_1,\ldots
,a_n\rangle  $ and call $\varphi$ {\it diagonal}. One-dimensional forms are easy to analyze, because
$\langle a\rangle\cong \langle b\rangle$ if and only if $b=as^d$ for some $s\in k^\times$.
We have $\varphi=\langle  a_1,\ldots ,a_n\rangle    \cong \langle  a_1s_1^d,\ldots ,a_ns_n^d\rangle  $
for arbitrary $s_i\in k^{\times}$.
 Let $d\geq 3$ and suppose that $a_i,b_j\in k^\times$. Then
$$\langle  a_1,\ldots ,a_n\rangle\cong \langle  b_1,\ldots ,b_n\rangle$$
 if and only if there is a permutation $\pi\in S_n$ such that
 $\langle b_i \rangle\cong \langle a_{\pi (i)}\rangle$ for every $i$.
 A form is called {\it separable} if it becomes diagonal over the algebraic closure of the base field $k$  \cite{H}.

 \subsection{}  Let $(V,\varphi)$ be a form over $k$ of degree $d$ in $n$ variables over $k$.
 An element $a\in k$ is {\it represented by} $\varphi$ if there is an $x\in V$ such that $\varphi(x)=a$.
 An element $a\in k^{\times}$ such that $\varphi\cong a\varphi$ is called a {\it similarity factor} of the form $\varphi$.
Write $D_k(\varphi)= \{a \in k^\times\mid\varphi(x)=a \text{ for some } x\in V\}$ for the set of non-zero
elements represented by a form $\varphi$ of degree $d$ on $V$ over $k$ and $G_k(\varphi)=\{a\in
k^\times\mid\varphi \cong a\varphi\}$ for the group of similarity factors of $\varphi$ over $k$.
The subscript $k$ is omitted if it is
clear that $\varphi$ is a form over the base field $k$. If $D_k(\varphi)=k^\times$ then
$\varphi$ is called {\it universal}. If $D_l(\varphi)=l^\times$ for each field extension $l$ over $k$,
$\varphi$ is called {\it strongly universal}.
 (Each nondegenerate isotropic quadratic form over a field of characteristic
not $2$ is strongly universal.)
 A form $\varphi$  is called \emph{round} if $D(\varphi)\subset G(\varphi)$.

Note that $k^{\times d}$ is a subgroup of $ G(\varphi)$ and that $G(\varphi)=k^\times$
if $\varphi$ is the zero form. Thus, $G(\varphi)$ is a union of cosets
of $k^\times$ modulo $k^\times/k^{\times d}$. Therefore, we may also think of $G(\varphi)$ as a group of
classes of $d$th powers; that is, we can work instead with the factor group $G(\varphi)/k^{\times d}$.
By abuse of notation, we sometimes also regard $D(\varphi)$ as a subset of $k^\times/k^{\times d}$ instead of writing
$D(\varphi)/k^{\times d}$.

\subsection{} Let $(V,\varphi)$ be a form of degree $d$ over $k$.
For any field extension $l/k$, we can canonically provide $V{\otimes_k}l$ with a form of degree $d$ denoted by $\varphi_l$. A
form $\varphi$ of degree $d$ over $l$ is \emph{defined} over $k$, if there is a form $\varphi_0$ of
degree $d$ over $k$ such that $\varphi\cong\varphi_0\otimes_k l$.

Let $l/k$ be a finite field extension and $s:l\rightarrow k$ a non-zero $k$-linear map. If $\Gamma:V\times\dots\times V\rightarrow
l$ is a nondegenerate $d$-linear form over $l$, then $s\Gamma:V\times\dots\times V\rightarrow
k$ is a nondegenerate $d$-linear form over $k$, with $V$ viewed as a $k$-vector space. The $d$-linear
space $(V,s\Gamma)$ is called the
{\it transfer} of $(V,\Gamma)$, it is also denoted by $s_*(V,\Gamma)$ or $s_*(\Gamma)$.
If the map $s$ is the trace of a field extension $l/k$, we  simply write $tr_{l/k}(\Gamma)$.
We have $s_*((V_1,\theta_1)\perp (V_2,\theta_2))=s_*(V_1,\theta_1)\perp s_*(V_2,\theta_2)$,
and $s_*(V_1,\Gamma_1)\cong s_*(V_2,\Gamma_2)$ if $(V_1,\Gamma_1)\cong (V_2,\Gamma_2)$.
If $\theta$ is a $d$-linear space over $k$, then there exists a canonical isometry
$$s_*(\theta_l\otimes_l \Gamma)\cong \theta \otimes_k s_*(\Gamma).$$

\subsection{} Let $P_d(k)$ be the set of isomorphism classes of nondegenerate  forms of degree $d$ over $k$.
Together with direct sum and  tensor product, $P_d(k)$ is a commutative semi-ring
with unit element $\langle 1\rangle$. Let $ \widehat{W}_d(k)$ be the Grothendieck ring
associated to $P_d(k)$, called the {\it Witt-Grothendieck ring of forms of degree $d$}
over $k$  \cite{R}. $\widehat{W}_d(k)$ is a commutative $k$-algebra with unit element $\langle 1\rangle$.
Recall that the elements of $ \widehat{W}_d(k)$ are equivalence classes of formal differences $[\theta] -[\Gamma]$
of suitable nondegenerate forms over $k$. Two $d$-linear forms  $\theta$ and $\Gamma$ are {\it equivalent} over $k$,
if there are two  $d$-linear forms $\theta'$ and $\Gamma'$ over $k$ such that there is an isomorphism
$\theta\oplus \theta'\cong \Gamma\oplus \Gamma'$ of $d$-linear forms.
The map $\theta\mapsto [\theta - 0]$ induces a canonical embedding $i:P_d(k)\rightarrow   \widehat{W}_d(k)$.

Let $l$ be a field extension of $k$. The map
$$r^*:\widehat{W}_d(k)\rightarrow  \widehat{W}_d(l),\quad (V,\theta)\mapsto  (V,\theta)\otimes l$$
is a ring homomorphism.
Let $l$ be a finite field extension of $k$ and $s:l\rightarrow  k$ a non-zero $k$-linear map. For each non-zero $s\in \Hom_k (l,k)$, the map
$$s_*: \widehat{W}_d(l)\rightarrow  \widehat{W}_d(k),\quad (U,\Gamma)\mapsto  s(V,\Gamma)$$
is a homomorphism of $\widehat{W}_d(k)$-modules.
The composite map
$$s_*r^*:\widehat{W}_d(k)\rightarrow  \widehat{W}_d(l)\rightarrow \widehat{W}_d(k)$$
is given by multiplication with $s_*\langle 1\rangle$, since
$s_*(\theta_l)\cong \theta \otimes_k s_*\langle 1\rangle.$
The image of $s_*$ is an ideal in $\widehat{W}_d(k)$,
which is independent of the choice of the non-zero $k$-linear map $s\in \Hom_k(l,k)$.
(For the proof see \cite[5.7, p.~48]{S}, which carries over to arbitrary $d$.)
By computing $s_*(\langle 1\rangle)$, we obtain results on $s_*r^*$ and thus about $r^*$.
For instance, the kernel of $r^*$ must be contained in the annihilator of
$s_*\langle 1\rangle$.
Now the composite
$\widehat{W}_d(l)^G\rightarrow  \widehat{W}_d(k) \rightarrow  \widehat{W}_d(l)^G$ of the two maps $r^*: \widehat{W}_d(k)\rightarrow  \widehat{W}_d(l)^G$
 and $tr_{l/k}: \widehat{W}_d(l)^G\rightarrow  \widehat{W}_d(k)$ maps a $d$-linear space $(V,\theta)$ over $l$ to
$tr_{l/k}(V,\theta)$ and then to
$$tr_{l/k}(V,\theta)\otimes_k l\cong
 \oplus_{\sigma\in G}(V^\sigma,\theta^\sigma)\cong \oplus_{\sigma\in G}(V,\theta)=[l:k]\times (V,\theta)$$
\cite{R}. This yields the following result (for $d=2$, cf. \cite[8.2, p.~62]{S}):

The cokernel of the map
$r^*:  \widehat{W}_d(k) \rightarrow  \widehat{W}_d(l)^G$
and the kernel of the map
$tr_{l/k}: \widehat{W}_d(l)^G\rightarrow  \widehat{W}_d(k)$
are annihilated by the degree $[l:k]$.

 For a perfect field, the Witt-Grothendieck ring $\widehat{W}_d(k)$ is the union of the images
of the trace maps $tr_{l/k}:\widehat{W}_d(l) \rightarrow  \widehat{W}_d(k)$ over all finite field extensions $l$ of $k$.
This follows from the classification of indecomposable forms over perfect fields
 as traces of absolutely indecomposable ones \cite{Pu3}.

For nondegenerate quadratic forms, the dimension of a form induces a homomorphism $e_0:W(k)\rightarrow \mathbb{Z}_2$ called the {\it dimension index}.
Its kernel $I$, the {\it fundamental ideal} of the Witt ring, and the filtration induced
 by its powers, link the classical
Witt ring to Milnor $K$-theory and Galois cohomology. For the fundamental ideal $I$  in the classical
Witt ring of quadratic forms, the discriminant $d$ of a quadratic form  \cite[2.2.1]{S}.
induces a group homomorphism $d:I\rightarrow k^\times/k^{\times 2}\cong H^1(K,\mu_2)$. The induced map
$d:I/I^2\rightarrow k^\times/k^{\times 2}\cong H^1(K,\mu_2)$ is an isomorphism and also denoted by $e_1$ \cite{S}.
The question if there are cohomological invariants for separable forms of higher degree (i.e., of forms which become
diagonal over the algebraic closure of $k$), which generalize
the maps $e_0$ and $e_1$, was studied by Rupprecht \cite{R}. He
 found cohomological invariants for separable forms of degree $d\geq 3$ and achieved a cohomological classification of the separable forms of degree $d\geq 3$.
 Rupprecht
 defined the Witt ring $ \widehat{W}_r^{sep}(k)/H$ of separable forms of prime degree $r\not=2$ with the ideal
$H\subset  \widehat{W}_r^{sep}(k)$ in the Witt-Grothendieck ring of separable forms generated by the universal
form $h_r=\langle 1\rangle \oplus (l, tr_{l/k}(\langle x\rangle))$, where $l=k[x]/(\Phi)$ is a separable $k$-algebra of
dimension $r-1$ over $k$ and where $\Phi(x)=x^{r-1}+\dots+x+1$ is the $r$-th cyclotomic polynomial.
This ideal, however, seems ``too small'': For certain finite base fields, the group $I/I^2$ (with $I$ the fundamental ideal)
 is not even finitely generated \cite[10.5]{R}.

\section{$H$-forms}\label{sec:H-forms}

 Let $H$ be a subgroup of $k^\times/k^{\times d}$. We extend the definition of
$H$-forms given in \cite{H-P} to include the case that $H$ is an infinite subgroup:

\begin{definition} Let $H$ be a (not necessarily finite) subgroup of $ k^\times/k^{\times d}$.
A nondegenerate form $\varphi$ of degree $d$ over $k$
 is an {\it $H$-form}, if $H \subset G(\varphi)$; i.e., if $a\varphi\cong \varphi$
for all $a\in H$.
 A form $\Phi$ is called {\it $H$-reduced}, if $\Phi\cong \Psi\perp \varphi$ with an $H$-form $\varphi$
implies $\varphi=0$.
\end{definition}

\begin{remark} (i) If $H=\{1\}$ is trivial, then each form over $k$ is an $H$-form.\\
(ii)  Let $\varphi_1$ and $\varphi_2$ be two $H$-forms. Then the form
$\varphi_1\perp\varphi_2$ and the form $\Phi\otimes\varphi_1$ for any form $\Phi$ are also $H$-forms.
\\
(iii) Let $H$ be maximal, i.e. $H=H_{\rm max}= k^\times/k^{\times d}$.
  Then $\varphi$ is an $H$-form if and only if
 $G(\varphi)=D(\varphi)= k^\times/k^{\times d}$. Thus $\varphi$ is an $H$-form if and only if it is round and universal.
 For instance, the determinant of the $d$-by-$d$ matrices over $k$ is round and universal, and  the norm
 of a reduced Albert algebra is round and universal. The determinant of the $d$-by-$d$ matrices over $k$ is a form of degree $d$ and dimension $d^2$.  The norm
 of a reduced Albert algebra is a cubic form of dimension 27.
\\ (iv) Each hyperbolic quadratic form is an $H_{\rm max}$-form. However,
there  also exist anisotropic quadratic forms which are round and universal, so
 the quadratic $H_{\rm max}$-forms comprise a larger class than the quadratic hyperbolic forms.
\end{remark}

 \begin{example} Let $d=3$. Here are some examples of absolutely indecomposable cubic $H$-forms:\\
(i) If $l/k$ is a separable cubic field extension then its norm $\varphi(x)=n_{l/k}(x)$ is absolutely indecomposable
 and an $H$-form for $H=D(\varphi)$, the group of nonzero elements in $k$ that are represented by $\varphi$.
\\ (ii) If $A$ is a central simple $k$-algebra of degree $3$ then its reduced norm $\varphi(x)=n_{A/k}(x)$ is
 an $H$-form for $H=D(\varphi)$. If $A$ is split,
$\varphi$ is round, strongly universal and isotropic and an $H$-form for any $H$.
\\ (iii) Let $C$ be a unital composition algebra over $k$ with norm $n_C:C\rightarrow k$. If $A=k\times C$, and $\varphi(a+x)=a n_C(x)$ for all $a\in k,
x\in C$, then the cubic form $\varphi:A\rightarrow k$ is round, strongly universal, isotropic and an $H$-form for any choice of $H$.
\\ (iv) Let $A$ be a separable simple cubic Jordan algebra and $\varphi(x)=n_{A}(x)$ its norm; i.e., $A$ is
an Albert algebra of dimension 27 over $k$, or $A=H(B,\tau)$ with $B$ a central simple $l$-algebra of degree 3
with involution $\tau$, where $l=k$ and $\tau=id$, or where $l$ is a quadratic \'etale $k$-algebra with
conjugation $\tau$.
 The norm form of a reduced Albert algebra is trivially strongly universal, hence an $H$-form for any $H$.
(The norm of an Albert division algebra will in general not be universal.)
\\ (v) Let $(V,q)$ be a nondegenerate quadratic form over $k$. The cubic form $\varphi(a+v)=a q(v)$ defined for all $a
\in k, v\in V$ is strongly universal and isotropic, thus an $H$-form for any $H$.
\end{example}

\begin{theorem} \label{thm:1}
Let $d\geq 3$. Let $H\not=\{1\}$ be non-trivial. Each $H$-form of degree $d$ over $k$ is the orthogonal
sum of forms of the following kind:\\
(i) an indecomposable $H$-form;\\
(ii) an $H$-form of the type $\langle a_1,\dots,a_{s}\rangle \otimes \Psi$
($s\geq 2$,  $a_i\in H$), where $\Psi$ is an indecomposable form which is not an $H$-form, $s$ divides $|H|$,
and where $a_i\Psi\not\cong a_j\Psi$ for all $i\not=j$. One of the $a_i$
in $\langle a_1,\dots,a_{s}\rangle \otimes \Psi$  must be $1$.

\end{theorem}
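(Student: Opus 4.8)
The plan is to reduce the statement to a combinatorial analysis of how $H$ acts, by scaling, on the indecomposable summands guaranteed by the Krull--Schmidt theorem for $d$-linear spaces ($d\geq 3$) quoted in the preliminaries. First I would write a given $H$-form $\varphi$ as an orthogonal sum $\varphi\cong \Psi_1\perp\dots\perp\Psi_r$ of nondegenerate indecomposable forms, unique up to order and isomorphism. Scaling defines an action of $H$ on the set of isomorphism classes of indecomposable forms of degree $d$ via $[\Psi]\mapsto[a\Psi]$: this is well defined because $a\Psi$ is indecomposable whenever $\Psi$ is (a decomposition of $a\Psi$ yields one of $\Psi$ after scaling by $a^{-1}$), and because $a\Psi\cong\Psi$ already holds when $a\in k^{\times d}$, so the action factors through $H\subset k^\times/k^{\times d}$.

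The key step is to feed the hypothesis into Krull--Schmidt. For $a\in H$ we have $a\varphi\cong\varphi$, hence $a\Psi_1\perp\dots\perp a\Psi_r\cong\Psi_1\perp\dots\perp\Psi_r$; by uniqueness of the decomposition the multiset $\{[\Psi_1],\dots,[\Psi_r]\}$ is invariant under $[\,\cdot\,]\mapsto[a\,\cdot\,]$. Thus $H$ permutes the isomorphism classes occurring in $\varphi$, and comparing multiplicities shows that the multiplicity of a class $[\Xi]$ equals that of $[a^{-1}\Xi]$ for every $a\in H$, so all classes in one $H$-orbit occur with the same multiplicity. Grouping the summands of $\varphi$ into $H$-orbits therefore expresses $\varphi$ as an orthogonal sum of orbit-blocks, each block being some number $m$ of copies of the orthogonal sum over a single orbit.

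Next I would analyse a single orbit. Fix a representative $\Psi$; its stabiliser in $H$ is $H\cap G(\Psi)$, and by orbit--stabiliser the orbit has size $s=[H:H\cap G(\Psi)]$, which is finite (the orbit sits inside the finitely many summand classes of $\varphi$) and divides $|H|$ when $H$ is finite. Choosing coset representatives $a_1=1,a_2,\dots,a_s\in H$, the orbit is $\{a_1\Psi,\dots,a_s\Psi\}$ and its orthogonal sum is $a_1\Psi\perp\dots\perp a_s\Psi$. Since $\langle a\rangle\otimes\Psi\cong a\Psi$ directly from the definition of the tensor product, and since the tensor product distributes over $\perp$, this sum is exactly $\langle a_1,\dots,a_s\rangle\otimes\Psi$. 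If the orbit is a singleton then $H\subset G(\Psi)$, so $\Psi$ is an indecomposable $H$-form and we are in case (i); if $s\geq 2$ then $\Psi$ is not an $H$-form (otherwise $H\subset G(\Psi)$ would force a singleton), and we are in case (ii). The remaining assertions of (ii) are then immediate: distinct cosets give $a_i\Psi\not\cong a_j\Psi$ for $i\neq j$ (as $a_i^{-1}a_j\notin G(\Psi)$), the identity coset representative is $a_1=1$, and multiplication by any $b\in H$ permutes the cosets of $H\cap G(\Psi)$, which reconfirms that $\langle a_1,\dots,a_s\rangle\otimes\Psi$ is itself an $H$-form.

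The main obstacle I anticipate is purely the careful bookkeeping in the middle step: turning the single global isomorphism $a\varphi\cong\varphi$ into the precise combinatorial statement that $H$ permutes indecomposable isomorphism classes with multiplicity constant along orbits. This rests entirely on the uniqueness part of Krull--Schmidt, and one must check that multiplicities, and not merely the underlying set of classes, are preserved. Once this is in place, the identification of each orbit-sum with a form $\langle a_1,\dots,a_s\rangle\otimes\Psi$ and the verification of the side conditions in (ii) are formal.
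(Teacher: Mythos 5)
Your proof is correct and takes essentially the same approach as the paper: apply Krull--Schmidt, observe that $a\varphi\cong\varphi$ forces $H$ to permute (by scaling) the isomorphism classes of the indecomposable summands, and use the coset/stabilizer decomposition of $H$ to identify each orbit sum with $\langle a_1,\dots,a_s\rangle\otimes\Psi$ and to get $s\mid |H|$, $a_1=1$, and $a_i\Psi\not\cong a_j\Psi$. The only difference is organisational: the paper peels off a single orbit $\Gamma=\perp a_i\varphi_1$ and inducts, using Krull--Schmidt cancellation to see that the complement $\omega$ is again an $H$-form, whereas you perform the whole orbit decomposition in one pass via the constant-multiplicity observation.
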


\begin{proof} Let $\varphi$ be an $H$-form over $k$ of degree $d$ with Krull-Schmidt decomposition
$\varphi=\varphi_1\perp\dots\perp \varphi_r.$
 Let $r>1$ or else we are done. Suppose that $\varphi_1$ is not an $H$-form.
 As in \cite{H-P}, a form of the kind $a\varphi_1$, $a\in H$, is called a {\it conjugate}
of $\varphi_1$. Since $a\varphi\cong\varphi$ for all $a\in H$, we know that $a\varphi_1\in \{\varphi_1,\dots,\varphi_r\}$
for all $a\in H$ and so $\varphi_1$ has finitely many nonisomorphic conjugates $a_1\varphi_1,\dots, a_s\varphi_1$.
Here, $s\geq 2$: Since $\varphi_1$ is not an $H$-form, there exists $\widetilde{a}\in H$ such that $\widetilde{a}\varphi_1
\not\cong \varphi_1$ and $\widetilde{a}\varphi_1\cong \varphi_l$ for a suitable $l\in\{2,\dots,r\}$.
 Let $K=\{h\in H\,|\, h\varphi_1\cong\varphi_1\}$ be the stabilizer of $\varphi_1$, thus
 $H=a_1 K\cup \dots\cup a_sK$ is the disjoint union of the cosets. In particular, $s$ divides $|H|$, since
 $|H|=[H:K]|K|$.
Let $\Gamma=\perp a_i \varphi_1$ be the orthogonal sum of all the nonisomorphic conjugates of $\varphi_1$.
  By construction, $\Gamma$ is an $H$-form.
  Clearly, we have $\varphi\cong \Gamma\perp \omega$ for a suitable form
 $\omega$ over $k$. If $\omega=0$, then $\varphi=\Gamma=\langle a_1,\dots,a_{s}\rangle\otimes\varphi_1$
 with $\varphi_1$ an indecomposable form, which is not an
 $H$-form and $s> 1$. If $\omega\not=0$, then it is straightforward again that $\omega$ is an $H$-form, since both $\varphi$ and $\Gamma$
 are, and the assertion follows by induction.
 
   Since $1\in H$, one of the conjugates of $\varphi_1$
 is the form $\varphi_1$ itself. Hence, in (ii),  one of the $a_i$
in $\langle a_1,\dots,a_{s}\rangle \otimes \Psi$  must be $1$.
\end{proof}

\begin{proposition} \label{prop:1}
Let $d\geq 3$ and  $b\in k^\times$. \\
(i) The form $\langle b\rangle$ of degree $d$ over $k$ is an $H$-form if and only if $H=\{1\}$.\\
(ii) Let $H=\{a_1,\dots,a_s\}$ be non-trivial. Then
$$\langle a_1,\dots,a_s \rangle \otimes \langle b\rangle$$
is an $H$-form and there is no diagonal form $\varphi$ with entries in $H$ of dimension less than $s$ such that
$\varphi\otimes \langle b\rangle$ is an $H$-form. If $H=k^\times/k^{\times d}$ is maximal, then
$$\langle a_1,\dots,a_s \rangle \otimes \langle b\rangle\cong \langle a_1,\dots,a_s \rangle $$
is round and universal.\\
(iii) Let $H$ be non-trivial. If $\varphi$ is a diagonal $H$-form of degree $d$ over $k$, then its dimension is unequal to
$ld+1$ or $ld-1$ for an integer $l$ and $H$ must be finite.\\
 Let $H=\{a_1,\dots,a_s\}$. Then $\varphi$ is the orthogonal sum of $H$-forms of the kind
$$\langle a_1,\dots,a_s \rangle \otimes \langle b\rangle.$$
 In particular, if $H=k^\times/k^{\times d}$ is maximal, then
$\varphi$ is the orthogonal sum of round universal forms of the kind
$\langle a_1,\dots,a_s \rangle.$
\end{proposition}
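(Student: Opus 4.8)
The plan is to prove the three parts of Proposition~\ref{prop:1} in order, leaning heavily on Theorem~\ref{thm:1} and on the one-dimensional isomorphism criterion $\langle a\rangle\cong\langle b\rangle\iff a\equiv b$ in $k^\times/k^{\times d}$, which lets me treat diagonal forms combinatorially as multisets of classes in $k^\times/k^{\times d}$. For part (i), recall that $a\langle b\rangle=\langle ab\rangle$, so $\langle b\rangle$ is an $H$-form precisely when $ab\equiv b$ for all $a\in H$, i.e. $a\equiv 1$ for all $a\in H$; this forces $H=\{1\}$, and conversely $H=\{1\}$ makes every form an $H$-form by Remark~(i). This is immediate and will be a sentence or two.

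For part (ii), I would first check that $\psi=\langle a_1,\dots,a_s\rangle\otimes\langle b\rangle=\langle a_1b,\dots,a_sb\rangle$ is an $H$-form: multiplying by $a_j\in H=\{a_1,\dots,a_s\}$ permutes the entries $\{a_ib\}$ up to $d$th powers because $H$ is a group, so $a_j\psi\cong\psi$ by the permutation criterion for diagonal isomorphism (the statement ``$\langle a_1,\dots,a_n\rangle\cong\langle b_1,\dots,b_n\rangle$ iff a permutation matches the classes'' quoted in the Preliminaries). The minimality claim is the crux of (ii): if $\varphi=\langle c_1,\dots,c_m\rangle$ has all $c_i\in H$ and $\varphi\otimes\langle b\rangle$ is an $H$-form with $m<s$, I want a contradiction. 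The idea is that $\varphi\otimes\langle b\rangle=\langle c_1b,\dots,c_mb\rangle$ being an $H$-form means $H$ acts on the multiset $\{c_1b,\dots,c_mb\}\subset k^\times/k^{\times d}$ by multiplication, and since this is a free action of the group $H$ on a set it meets (the orbit of any $c_ib$ has size exactly $|H|=s$), the multiset must be a union of full $H$-orbits, hence have size a multiple of $s$; with $m<s$ and $m\geq1$ this is impossible. I expect this orbit-counting step to be where care is needed, because I must verify the $H$-action is free on classes---that $a(c_ib)\equiv c_ib$ forces $a\equiv1$---which again follows since we are multiplying nonzero classes in the group $k^\times/k^{\times d}$. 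The final sentence of (ii), that $\langle a_1,\dots,a_s\rangle\otimes\langle b\rangle\cong\langle a_1,\dots,a_s\rangle$ when $H$ is maximal, follows because $b\in k^\times=H$ up to $d$th powers, so multiplication by $b$ permutes $H$; roundness and universality then come from Remark~(iii), since a maximal $H$-form is exactly a round universal form.

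For part (iii), let $\varphi=\langle c_1,\dots,c_m\rangle$ be a diagonal $H$-form with $H$ nontrivial. The same free-action argument as in (ii) shows that multiplication by $H$ partitions the multiset of entry-classes into full orbits of size $s=|H|$; I first note this forces $H$ to be finite (an infinite $H$ would require infinitely many entries in an orbit, impossible for a finite-dimensional form), and that $s\mid m$, whence $m$ is a multiple of $s$ and in particular $m\notin\{ld\pm1\}$ would need separate justification---here I would argue that the constraint is really $s\mid m$ together with $s\geq2$, and since the dimension $m$ is divisible by $s\geq2$ it cannot be of a form forbidden once we know more; I would state the dimension restriction as a consequence of $s\mid\dim\varphi$ and handle the $ld\pm1$ exclusion by noting $s\geq2$ divides $\dim\varphi$. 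The structural decomposition then follows by peeling off one orbit at a time: choose any entry $c_i$, form its $H$-orbit $\langle a_1c_i,\dots,a_sc_i\rangle\cong\langle a_1,\dots,a_s\rangle\otimes\langle c_i\rangle$, which is an $H$-subform, split it off by Krull--Schmidt, and induct on the remaining $H$-form of smaller dimension. The maximal case is immediate from (ii). The main obstacle throughout is cleanly establishing that the $H$-action on entry-classes is free and hence that orbits have full size $s$; once that is in hand, every divisibility and decomposition statement is bookkeeping.
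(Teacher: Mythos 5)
Your parts (i) and (ii), and the finiteness-plus-decomposition portion of (iii), are correct and in essence follow the paper's own route: the paper deduces these claims from Theorem~\ref{thm:1} (conjugates, stabilizer cosets) together with Krull--Schmidt, and your free-action/orbit-counting argument on entry classes is exactly that mechanism specialized to diagonal forms, where it is legitimate because for $d\geq 3$ a diagonal form is determined up to isomorphism by its multiset of entry classes in $k^\times/k^{\times d}$, and because left multiplication by $H$ on that group is a free action.

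The genuine gap is your treatment of the exclusion $\dim\varphi\neq ld\pm 1$ in (iii). You propose to ``handle the $ld\pm1$ exclusion by noting $s\geq2$ divides $\dim\varphi$'', but divisibility of $m=\dim\varphi$ by some $s\geq 2$ does not by itself rule out $m\equiv\pm1 \,{\rm mod}\, d$: for instance $s=2$, $d=3$, $m=4$ satisfies $s\mid m$ and $m=d+1$. What saves the statement is an arithmetic fact about $H$ that you never invoke: every element of $k^\times/k^{\times d}$ has order dividing $d$ (since $a^d\in k^{\times d}$), so by Cauchy's theorem every prime $p$ dividing $s=|H|$ also divides $d$; then $p\mid m$ and $p\mid d$ together force $m\not\equiv\pm 1\,{\rm mod}\, d$. (In the example above, no subgroup of order $2$ exists in $k^\times/k^{\times 3}$ --- which is precisely the missing ingredient.) Alternatively, and closer in spirit to the paper's citation of \cite[Proposition 1]{Pu1}: comparing the product of the entries of $\varphi$ and of $a\varphi$ gives $a^m\equiv 1\,{\rm mod}\,k^{\times d}$ for every $a\in H$; combined with $a^d\equiv 1$ this yields $a^{\gcd(m,d)}\equiv 1$, so $m=ld\pm 1$ (i.e.\ $\gcd(m,d)=1$) would force $H=\{1\}$, contradicting non-triviality. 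Either repair is short, but as written your justification of this clause is false; note that this is the one clause the paper does not get from Theorem~\ref{thm:1} but outsources to \cite{Pu1}.
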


\begin{proof} (i) $\langle b\rangle$  is an $H$-form if and only if
$h\langle b\rangle\cong\langle b\rangle$ for all $h\in H$. This is equivalent to $hb \equiv b\,{\rm mod}\, k^{\times d}$
for all $h\in H$; i.e.,
to $h\equiv 1\,{\rm mod}\, k^{\times d}$ for all $h\in H$.\\
(ii) is trivial since $a\langle b\rangle\not \cong\langle b\rangle$ for all $a\in H$, $a\not=1$.
Since multiplication by $b$
only permutes the entries of the form $\langle a_1,\dots,a_s \rangle$ for maximal $H$,
$\langle a_1,\dots,a_s \rangle \otimes \langle b\rangle\cong \langle a_1,\dots,a_s \rangle$.
\\
(iii) The assertion follows directly from \cite[Proposition 1]{Pu1}, Theorem 1 and (i): Let $\varphi=\langle b,\dots \rangle$. Since
$\langle b\rangle$  is not an $H$-form, there exists an element $a\in H$ such that $a\langle b\rangle\not\cong
\langle b\rangle$. We know that there can only be finitely many non-isomorphic conjugates of
$\langle b\rangle$. If $H$ were infinite, there would be infinitely many, because
$a\langle b \rangle\not\cong \tilde a \langle b \rangle$ for $a, \tilde a\in H$, $a\not =\tilde a$.
Thus $H$ is finite and we obtain the assertion.
\end{proof}

\begin{corollary} \label{cor:1}
 (i) For  infinite $H$, there are no diagonal $H$-forms of degree $d\geq 3$ over $k$.\\
(ii) Let $H$ be non-trivial and finite. If $\varphi$ is a diagonal cubic $H$-form then its dimension is a multiple of 3.
 If $\varphi$ is a diagonal quartic $H$-form then its dimension is a multiple of 2.\\
 (iii) Let $H=\{a_1,\dots,a_s\}$ be non-trivial. If $H$ is maximal, then
 $\langle a_1,\dots,a_s \rangle \otimes \Psi$
 is an $H$-form, for every form $\Psi$ over $k$.
\end{corollary}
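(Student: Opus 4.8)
The corollary gathers three consequences of Proposition~\ref{prop:1} and of the group structure of $k^\times/k^{\times d}$, so the plan is to dispatch each part separately. For (i) I would argue by contraposition. An infinite $H$ is in particular non-trivial, so Proposition~\ref{prop:1}(iii) applies and tells us that a diagonal $H$-form can exist only when $H$ is finite. Hence for infinite $H$ there is no diagonal $H$-form of degree $d\geq 3$, which is precisely (i); nothing further is required here.

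For (ii) I would start again from Proposition~\ref{prop:1}(iii), which presents any diagonal $H$-form $\varphi$ (with $H=\{a_1,\dots,a_s\}$ finite) as an orthogonal sum of forms $\langle a_1,\dots,a_s\rangle\otimes\langle b\rangle$. Each summand has dimension $s\cdot 1=s=|H|$, so $\dim\varphi$ is a multiple of $|H|$, and it remains only to pin down $|H|$. This is the one step with genuine content: the key observation is that every element of $k^\times/k^{\times d}$ has order dividing $d$, since $(ak^{\times d})^d=a^dk^{\times d}=k^{\times d}$, so the finite abelian group $H$ has exponent dividing $d$. For $d=3$ this forces $H$ to be an elementary abelian $3$-group, whence $|H|=3^m$ with $m\geq 1$ and $3\mid\dim\varphi$; for $d=4$ it forces $H$ to be a $2$-group, whence $|H|=2^m$ with $m\geq 1$ and $2\mid\dim\varphi$.

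For (iii) I would first verify that the diagonal form $\langle a_1,\dots,a_s\rangle$, whose entries exhaust $H=\{a_1,\dots,a_s\}$, is itself an $H$-form. Indeed, for $a\in H$ one has $a\langle a_1,\dots,a_s\rangle=\langle aa_1,\dots,aa_s\rangle$, and since $aH=H$ as sets of classes modulo $k^{\times d}$ the entries $aa_1,\dots,aa_s$ merely permute $a_1,\dots,a_s$; thus $a\langle a_1,\dots,a_s\rangle\cong\langle a_1,\dots,a_s\rangle$ for every $a\in H$. Because the tensor product of an $H$-form with an arbitrary form is again an $H$-form (as recorded in the Remark above), it follows that $\langle a_1,\dots,a_s\rangle\otimes\Psi$ is an $H$-form for every form $\Psi$. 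The hypothesis that $H$ be maximal is not needed for this permutation argument; it serves, via Proposition~\ref{prop:1}(ii), only to identify $\langle a_1,\dots,a_s\rangle$ as round and universal.
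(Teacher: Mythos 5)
Your proof is correct; parts (i) and (ii) draw on the same source as the paper (Proposition \ref{prop:1}(iii)), while part (iii) takes a genuinely different and in fact more general route. For (i) you invoke exactly the finiteness clause of Proposition \ref{prop:1}(iii); this is what the paper means by calling (i) ``obvious''. For (ii) the paper can read the claim directly off the clause that the dimension of a diagonal $H$-form avoids $ld\pm 1$: for $d=3$ this excludes the residues $1$ and $2$ modulo $3$, leaving only multiples of $3$, and for $d=4$ it excludes the odd residues, leaving only even dimensions. You instead combine the block decomposition of Proposition \ref{prop:1}(iii) (each block $\langle a_1,\dots,a_s\rangle\otimes\langle b\rangle$ has dimension $s=|H|$) with the observation that $H$ has exponent dividing $d$, so that $|H|$ is a power of $3$ (resp.\ of $2$); this is equally valid and actually yields the stronger conclusion that $\dim\varphi$ is a multiple of $|H|$. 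For (iii) the paper argues through roundness: by Proposition \ref{prop:1}, $\langle a_1,\dots,a_s\rangle$ is an $H$-form, hence --- using maximality --- round and universal; the tensor product of a round universal form with any form is again round and universal, hence --- using maximality again --- an $H$-form. You bypass roundness and universality entirely: your permutation argument shows $\langle a_1,\dots,a_s\rangle$ is an $H$-form for an \emph{arbitrary} subgroup $H$, and the fact recorded right after the definition of $H$-forms (that $\Phi\otimes\varphi$ is an $H$-form whenever $\varphi$ is) finishes the proof. Your remark that maximality is superfluous here is correct and is consistent with Proposition \ref{prop:1}(ii) of the paper, which already asserts the tensor statement for any non-trivial finite $H$ when $\Psi$ is one-dimensional; what the paper's detour buys is only the additional identification of these $H$-forms as round and universal, which matters elsewhere in the paper but is not needed for the corollary as stated.
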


\begin{proof} It remains to show (iii), since (i) and (ii) are obvious:\\
By Proposition \ref{prop:1}, $\langle a_1,\dots,a_s \rangle $ is an $H$-form and thus round and universal.
Since the tensor product of a round universal form with any other
one again is round and universal, the form $\langle a_1,\dots,a_s \rangle \otimes \Psi$ is an $H$-form.
\end{proof}

\begin{proposition} \label{prop:2}
Let $d\geq 3$.\\
(i) An indecomposable separable form $tr_{l/k}\langle c\rangle$ with $c\in l^\times$ is an $H$-form if and only if
$l$ is a separable field extension of $k$ such that $h\equiv 1 \,{\rm mod}\, l^{\times d}$ for all elements $h\in H$.\\
(ii)  Let $H=\{a_1,\dots,a_s\}$ be non-trivial. Each separable $H$-form of degree $d$ over $k$
is the orthogonal sum of forms of the following kind:\\
(a) $\langle a_1,\dots,a_{{s}}\rangle \otimes \langle b\rangle,$ where $ b\in k^\times$.\\
(b) An $H$-form of the kind $tr_{l/k}\langle c\rangle$ with $c\in l^\times$.\\
(c) $\langle a_{1},\dots,a_{s}\rangle \otimes tr_{l/k}\langle c\rangle,$
where $tr_{l/k}\langle c\rangle$ is an indecomposable form which is not an $H$-form
(i.e., $l$ is a separable field extension of $k$ such that $h\equiv 1 \,{\rm mod}\, l^{\times d}$
 for all elements $h\in H$).
\end{proposition}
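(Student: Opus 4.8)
The plan is to establish the recognition criterion (i) first, since the structural statement (ii) rests on it, and then to feed (i) into the general decomposition of Theorem \ref{thm:1} together with the classification of indecomposable separable forms as transfers $tr_{l/k}\langle c\rangle$ (see \cite{H, Pu3}).

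For the sufficiency half of (i) I would argue by a direct transfer computation. Because the trace is $k$-linear and $a\in k^\times$, scaling commutes with the transfer, so $a\cdot tr_{l/k}\langle c\rangle\cong tr_{l/k}\langle ac\rangle$. If every $h\in H$ satisfies $h\in l^{\times d}$, then $\langle hc\rangle\cong\langle c\rangle$ over $l$ (one-dimensional forms over $l$ being isomorphic precisely up to $d$-th powers), and applying $tr_{l/k}$, which preserves isomorphism, gives $h\cdot tr_{l/k}\langle c\rangle\cong tr_{l/k}\langle c\rangle$; hence $H\subset G(tr_{l/k}\langle c\rangle)$ and the form is an $H$-form. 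For the necessity half I would use that an isomorphism $tr_{l/k}\langle hc\rangle\cong tr_{l/k}\langle c\rangle$ of indecomposable separable forms is controlled by their centre $l$: it induces a $k$-algebra automorphism $\sigma\in\Aut(l/k)$, after which the two one-dimensional forms over $l$ differ only by a $d$-th power, yielding the relation $h\,\sigma(c)\equiv c \pmod{l^{\times d}}$.

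The step I expect to be the main obstacle is exactly this necessity direction: passing from $h\,\sigma(c)\equiv c\pmod{l^{\times d}}$ to the desired $h\in l^{\times d}$ forces one to control the centre automorphism $\sigma$. Applying $N_{l/k}$ kills the contribution of $\sigma$ and yields only $N_{l/k}(h)=h^{[l:k]}\equiv 1\pmod{k^{\times d}}$, i.e. $h^{[l:k]}\in l^{\times d}$, so the argument really needs $\sigma$ to be trivial (equivalently, that a $k$-scalar similarity of $tr_{l/k}\langle c\rangle$ cannot be realised through a genuinely nontrivial motion of the centre). Establishing this is the delicate point and is where I would concentrate the work, since without it the similarity group $G(tr_{l/k}\langle c\rangle)\cap k^\times$ can a priori exceed $l^{\times d}\cap k^\times$.

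Granting (i), part (ii) should follow structurally. Applying Theorem \ref{thm:1} to a separable $H$-form $\varphi$ decomposes it into indecomposable $H$-forms and blocks $\langle a_1,\dots,a_s\rangle\otimes\Psi$ with $\Psi$ indecomposable and not an $H$-form. Since separability passes to orthogonal summands and tensor factors, each indecomposable piece is separable, hence either one-dimensional $\langle b\rangle$ (the case $l=k$) or $tr_{l/k}\langle c\rangle$ by the classification. An indecomposable $H$-form summand is then of type (b); a block with $\dim\Psi=1$ gives type (a) with $\Psi=\langle b\rangle$; and a block with $\dim\Psi>1$ gives $\Psi\cong tr_{l/k}\langle c\rangle$, whose failure to be an $H$-form is translated by (i) into the stated condition on $l$, producing type (c). The one bookkeeping point left is to match the enumeration $\{a_1,\dots,a_s\}=H$ with the coset representatives of the stabilizer $K=\{h\in H\mid h\Psi\cong\Psi\}$ furnished by Theorem \ref{thm:1}: for $\Psi=\langle b\rangle$ Proposition \ref{prop:1}(i) shows $K$ is trivial, so $s=|H|$, and for a trace form $\Psi$ the same has to be read off from the similarity analysis of (i). Reconciling this stabilizer with $|H|$ in the higher-dimensional case is the secondary difficulty, and it again reduces to the control of similarity factors that drives the proof of (i).
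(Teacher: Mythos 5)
Your plan follows the same architecture as the paper's own proof: the sufficiency half of (i) is the same scaling-through-the-transfer computation, and (ii) is obtained by feeding (i) into Theorem \ref{thm:1} together with the classification of indecomposable separable forms as trace forms $tr_{l/k}\langle c\rangle$. The divergence is at the necessity half of (i), which the paper settles in one line by citing \cite[3.12]{H-P} for the equivalence ``$tr_{l/k}\langle hc\rangle\cong tr_{l/k}\langle c\rangle$ if and only if $\langle hc\rangle\cong\langle c\rangle$ over $l$'', with no automorphism of $l$ appearing; you instead reduce necessity to the claim that a $k$-scalar similarity cannot move the centre $l$ by a nontrivial $\sigma\in\Aut_k(l)$, and you leave that claim unproved. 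Judged as a proof, that is the gap in your attempt.

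However, your suspicion about that step is justified, and in fact the gap cannot be closed: the ``only if'' direction of (i) is false whenever the Galois twist you worry about actually occurs. Take $d=3$, $k=\mathbb{Q}_7$, $\zeta\in k$ a primitive cube root of unity, $l=k(\pi)$ with $\pi^3=7$, and $H=\{1,\zeta,\zeta^2\}$, a nontrivial subgroup of $k^\times/k^{\times 3}$ (the residue $2$ of $\zeta$ is not a cube in $\mathbb{F}_7$). Since $\zeta\pi=\sigma(\pi)$ for a generator $\sigma$ of ${\rm Gal}(l/k)$ and the trace is Galois invariant, we get $\zeta\, tr_{l/k}\langle \pi\rangle = tr_{l/k}\langle \sigma(\pi)\rangle\cong tr_{l/k}\langle\pi\rangle$; concretely, in the basis $1,\pi,\pi^2$ this form is $63a^2c+63ab^2+441bc^2$, and the substitution $(a,b,c)\mapsto (a,\zeta b,\zeta^2 c)$ multiplies it by $\zeta^2$. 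So the indecomposable separable form $tr_{l/k}\langle\pi\rangle$ is an $H$-form, yet $\zeta\notin l^{\times 3}$, because $l/k$ is totally ramified and the residue of $\zeta$ is not a cube in $\mathbb{F}_7$. Thus the criterion must be the Galois-twisted one: for every $h\in H$ there exists $\sigma\in \Aut_k(l)$ with $h\equiv \sigma(c)/c\,{\rm mod}\, l^{\times d}$; your norm computation then establishes (i) exactly in the cases $\gcd([l:k],d)=1$ or $\Aut_k(l)=\{\id\}$. The secondary difficulty you flag in (ii) is equally real: ``not an $H$-form'' only means \emph{some} $h\in H$ fails the criterion, so the stabilizer of such a trace form in $H$ need not be trivial (over $k=\mathbb{Q}_7$ with $H=k^\times/k^{\times 3}$ of order $9$ and $l$ the unramified cubic extension, every unit of $k$ is a cube in $l$ but $7$ is not, so the stabilizer of $tr_{l/k}\langle c\rangle$ has order $3$), and then Theorem \ref{thm:1} produces blocks smaller than the full $\langle a_1,\dots,a_s\rangle\otimes tr_{l/k}\langle c\rangle$ asserted in (c). In short, your proposal is incomplete precisely where the paper's argument is itself incorrect: the cited equivalence suppresses the automorphism, and the statement needs to be amended accordingly.
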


\begin{proof} (i) An indecomposable form of the kind $tr_{l/k}\langle c\rangle$ with $c\in l^\times$ is an $H$-form if and only if
$h tr_{l/k}\langle c\rangle\cong tr_{l/k}\langle hc\rangle\cong tr_{l/k}\langle c\rangle$ for all $h\in H$, which is
equivalent to $\langle hc\rangle\cong \langle c\rangle$ \cite[3.12]{H-P}. This in turn is the case if and only if we have
$h\equiv 1 \,{\rm mod}\, l^{\times d}$ for all elements $h\in H$.\\
(ii) The assertion follows again as a direct consequence of Theorem \ref{thm:1} and (i). Using the Theorem of Krull-Schmidt, it remains to
study the case that $\varphi\cong tr_{l/k}(\langle c\rangle)\perp\dots$, where $tr_{l/k}(\langle c\rangle)\perp\dots $ is indecomposable
and not an $H$-form. Then $\langle a_{m_1},\dots,a_{m_l}\rangle \otimes tr_{l/k}\langle c\rangle$ is a subform of $\varphi$,
where $m_l> 1$ divides $|H|$, $a_{m_i}\not \equiv a_{m_j}{\rm mod}\, l^{\times d}$ for $i\not=j$, and $a_{m_i}\in H$
for all $i$. Since no non-trivial element of $H$ becomes a $d$th power in $l^\times$  by (i), however,
the forms are of the kind
$\langle a_{1},\dots,a_{s}\rangle \otimes tr_{l/k}\langle c\rangle,$
where $tr_{l/k}\langle c\rangle$ is an indecomposable form which is not an $H$-form.
\end{proof}

\begin{example} (i) Let $k$ be a field such that $k^\times/k^{\times d}$ has prime order $p>1$.
 Put $H=k^\times/k^{\times d}=\{1,a,\dots,a^{p-1}\}$.
Let $\varphi$ be a form over $k$. If $G(\varphi)=k^\times/k^{\times d}$,
then $\varphi$ is  round and universal and thus an $H$-form.

 If $\varphi$ is a diagonal $H$-form of degree $d$ over $k$, then
$\varphi\cong m\times \langle 1,a,\dots,a^{p-1}\rangle$.

If $\varphi$ is a separable $H$-form of degree $d$ over $k$, then it is the orthogonal sum of forms of the kind
$ \langle 1,a,\dots,a^{p-1}\rangle$, forms of the kind $\langle 1,a,\dots,a^{p-1}\rangle \otimes tr_{l/k}\langle c\rangle,$
with $c\in l^\times$ arbitrary ($l$ a field extension of $k$ such that $h\not\equiv 1 \,{\rm mod}\, l^{\times d}$
for all elements $h\in H$),
and forms of the kind
$tr_{l'/k}\langle b\rangle$ with $b\in l'^\times$ (and any $b\in l'^\times$ works here),
 provided that
$l'$ is a field extension of $k$ such that $h\equiv 1 \,{\rm mod}\, l'^{\times d}$ for all elements $h\in H$.
\\ (ii)  Let $k=\mathbb{F}_q$ be a finite field ($q=p^t$) and $d=r$ a prime number such that $p>r$.
If $q\equiv 1\,{\rm mod}\,r$ (i.e., if $k$ contains a primitive $r^{th}$ root of unity $\xi$),
then $k^\times/k^{\times r}$ is cyclic of order $r$, otherwise it is trivial.
Suppose that $q\equiv 1\,{\rm mod}\, r$
and choose $H=k^\times/k^{\times r}=\{1,a,\dots,a^{r-1}\}$ (if, additionally, $r^2$ does not divide $q-1$,
 even $k^\times/k^{\times r}=\{1,\xi,\dots,\xi^{r-1}\}$).
Hence $H$-forms over $k$  are exactly the universal round forms over $k$.

$\varphi$ is a diagonal universal round form  of degree $r$ if and only if
$$\varphi\cong m\times \langle 1,a,\dots,a^{r-1}\rangle=
\langle 1,a,\dots,a^{r-1}\rangle\perp\dots\perp\langle 1,a,\dots,a^{r-1}\rangle$$
for some integer $m$ and ${\rm dim}\,\varphi$ is a multiple of the degree. Moreover, if $-1\in {\mathbb F}_q^{\times r}$ and $r \geq 4$, then $u_{\rm diag}(r,k)\leq r-1$ \cite{O}, each $H$-form is isotropic.

 An indecomposable separable form $ tr_{l/k}\langle c\rangle$ is universal and round if and only
if $h\equiv 1 \,{\rm mod}\, l^{\times r}$ for all elements $h\in k^\times$. However, this cannot happen for any choice of
$c\in l^\times$, because $|l^\times/l^{\times r}|=r$. Therefore no such form is an $H$-form.

Any separable $H$-form of degree $r$ over $k$  is the orthogonal sum of forms of the kind
\begin{enumerate}
\item $\langle 1,a,\dots,a^{r-1}\rangle$ (which all are round and universal);
\item $\langle 1,a,\dots,a^{r-1}\rangle \otimes tr_{l/k}\langle c\rangle,$
with $c\in l^\times$ arbitrary
\end{enumerate}
\end{example}

\section{The Witt ring}\label{sec:Witt ring}

Let $d\geq 2$ and let $H$ be a subgroup of $k^\times/k^{\times d}$.
For forms of higher degree the following construction was introduced by Harrison-Pareigis \cite{H-P}  for  finite $H$.
 We also allow infinite $H$ (the proofs remain the same for infinite $H$):

For each form $\Phi$ there is an $H$-reduced form $\Phi_H$ and an $H$-form $t_H(\Phi)$ such that
$\Phi\cong \Phi_H \perp t_H(\Phi)$. For $d>2$ this decomposition is unique up to isomorphisms \cite[1.1, 1.2]{H-P}.
 Two forms $\Phi$ and $\Psi$ are {\it $H$-equivalent}, written $\Phi \sim_H \Psi$, if there are $H$-forms
$\varphi_1$ and $\varphi_2$ such that $\Phi\perp \varphi_1\cong \Psi\perp \varphi_2$.
The equivalence relation $\sim_H$ is compatible with taking orthogonal sums and tensor products, thus
orthogonal sums and the tensor product induce an addition and a multiplication on the
 set of equivalence classes
$$W_d(k,H)=\{[\Phi]\,|\,\Phi \, \text{a form over}\, k\}.$$
For $d>1$, each equivalence class $[\Phi]$
 in $W_d(k,H)$ contains at least one $H$-reduced representative $[\Phi]=[\Phi_H]$.
 For each $d>2$, the $H$-reduced representative in each equivalence class is uniquely determined, so $W_d(k,H)$
 can be viewed as subset of $P_d(k)$ \cite[1.3, 1.4]{H-P}. The canonical map
$$r_H:P_d(k)\rightarrow  W_d(k,H), \quad \Phi \mapsto [\Phi]$$
 is surjective and preserves addition and multiplication. Hence $W_d(k,H)$ inherits the
structure of $P_d(k)$ and is a commutative semi-ring with addition given by orthogonal sums and
 multiplication by the tensor product. For finite $H$, $W_d(k,H)$ is a commutative ring
(the finiteness of $H$ is needed for proving the existence of
an additive inverse for any form $\Phi$ over $k$).
If $H$ is infinite, not every element has an additive inverse:

\begin{proposition}\label{prop:3}
Let $H$ be an infinite subgroup of $k^\times/k^{\times d}$.\\
(i) Let $\Phi$ be a form over $k$ such that there are only finitely many
elements $a_1,\dots, a_r\in k^\times/k^{\times d}$ such that $a_i \Phi \not \cong \Phi$. Then
$\Phi$ has the additive inverse $\widetilde{\Phi}=\bot_{i=1}^r a_i\Phi$ in $W_d(k,H)$.
\\(ii) Diagonal forms have no additive inverse in $W_d(k,H)$, if $d\geq 3$.
\end{proposition}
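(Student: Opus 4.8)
The plan is to reduce, in both parts, the existence of an additive inverse to a statement purely about $H$-forms. Since the zero class in $W_d(k,H)$ is represented exactly by the $H$-forms, a form $\Phi$ has an additive inverse $\Psi$ precisely when $\Phi\perp\Psi$ is $H$-equivalent to $0$. Unwinding the definition of $\sim_H$, the equality $[\Phi]+[\Psi]=[\Phi\perp\Psi]=0$ yields $H$-forms $\varphi_1,\varphi_2$ with $\Phi\perp\Psi\perp\varphi_1\cong\varphi_2$, so that $\Theta:=\Phi\perp\Psi\perp\varphi_1$ is itself an $H$-form with $\Phi$ as an orthogonal summand. Thus for (i) it suffices to exhibit a $\Psi$ making $\Phi\perp\Psi$ an $H$-form, and for (ii) to show that no such $\Theta$ can exist once $\Phi$ is diagonal and nonzero.

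For (i), I read the hypothesis as saying that the conjugates $a\Phi$ fall into only finitely many isomorphism classes; writing $Q=k^\times/k^{\times d}$ and $G=G(\Phi)/k^{\times d}$ (a subgroup of $Q$, since $k^{\times d}\subseteq G(\Phi)$), this means $[Q:G]=r+1$ is finite with transversal $\{1,a_1,\dots,a_r\}$ and $a_i\Phi\not\cong\Phi$. I would set $\widetilde\Phi=\bot_{i=1}^r a_i\Phi$ and check that $\Phi\perp\widetilde\Phi=\bot_{i=0}^r a_i\Phi$ (with $a_0=1$) is an $H$-form. Indeed, for any $b\in H\subseteq Q$ left translation by $b$ permutes the cosets of $G$, so there is a permutation $\tau$ with $ba_i\in a_{\tau(i)}G$; since $ba_i$ and $a_{\tau(i)}$ then differ by a similarity-factor class, $b\,a_i\Phi\cong a_{\tau(i)}\Phi$, whence $b(\bot_{i} a_i\Phi)\cong\bot_{i} a_{\tau(i)}\Phi\cong\bot_{i} a_i\Phi$. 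Hence $\Phi\perp\widetilde\Phi$ is an $H$-form and $[\widetilde\Phi]=-[\Phi]$. The only delicate point here is fixing the correct reading of the hypothesis so that the displayed formula for $\widetilde\Phi$ is the right one.

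Part (ii) is the substantive case and rests on the Krull--Schmidt theorem for $d\geq 3$. Suppose, for contradiction, that a nonzero diagonal $\Phi=\langle c_1,\dots,c_n\rangle$ has an additive inverse; by the reduction above there is an $H$-form $\Theta$ having $\Phi$ as an orthogonal summand. A decomposition $\Theta\cong\langle c_1\rangle\perp\cdots\perp\langle c_n\rangle\perp(\text{indecomposable summands of }\Psi\text{ and }\varphi_1)$ is a decomposition into nondegenerate indecomposables (each $\langle c_i\rangle$ is one for $d\geq 3$), so the one-dimensional form $\langle c_1\rangle$ occurs in the Krull--Schmidt decomposition of $\Theta$. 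For every $b\in H$ we have $b\Theta\cong\Theta$, and multiplying a decomposition of $\Theta$ by $b$ produces the decomposition of $b\Theta$; by uniqueness the multiset of indecomposable summands is unchanged, so $\langle bc_1\rangle=b\langle c_1\rangle$ must be isomorphic to one of the finitely many summands of $\Theta$.

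The contradiction---and the main point of the argument---is that $\langle bc_1\rangle\cong\langle b'c_1\rangle$ holds if and only if $bc_1\equiv b'c_1\pmod{k^{\times d}}$, i.e. $b=b'$ in $Q$; since $H$ is infinite, $\{\langle bc_1\rangle:b\in H\}$ is an infinite family of pairwise non-isomorphic one-dimensional forms, which cannot all sit among the finitely many Krull--Schmidt summands of $\Theta$. Hence no nonzero diagonal form admits an additive inverse. I expect the only real obstacle to be bookkeeping: one must be sure that $\langle c_1\rangle$ genuinely survives as a summand of the enlarged $H$-form $\Theta$ (guaranteed by uniqueness in Krull--Schmidt) and that scaling by $b$ preserves indecomposability; both are immediate once the decomposition is written down. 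Note that $d\geq 3$ is used twice, for Krull--Schmidt and for the rigidity $\langle a\rangle\cong\langle b\rangle\Leftrightarrow a\equiv b\pmod{k^{\times d}}$.
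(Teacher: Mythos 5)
Your proof is correct, and both parts run on the same engines as the paper's: for (i) the orbit--coset argument for the stabilizer of $\Phi$, and for (ii) Krull--Schmidt plus the rigidity $\langle a\rangle\cong\langle b\rangle\Leftrightarrow a\equiv b \bmod k^{\times d}$. Two differences are worth recording. First, in (i) you and the paper resolve the (literally unsatisfiable, for infinite $k^\times/k^{\times d}$) hypothesis differently: the paper's proof takes coset representatives of the stabilizer $K=\{h\in H\mid h\Phi\cong\Phi\}$ \emph{inside} $H$, whereas you take a transversal of $G(\Phi)/k^{\times d}$ in all of $k^\times/k^{\times d}$; both make $\Phi\perp\widetilde\Phi$ an $H$-form (yours is in fact an $H$-form for \emph{every} $H$ at once, at the cost of a slightly stronger finiteness hypothesis). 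Second, and more substantively, in (ii) the paper argues ``the additive inverse of a diagonal form again is a diagonal form'' and then cites Corollary \ref{cor:1}(i) (no diagonal $H$-forms for infinite $H$); as literally stated that intermediate claim is false, since any inverse can be fattened by an arbitrary, possibly non-diagonal, $H$-form. Your route avoids this: you work with the $H$-form $\Theta=\Phi\perp\Psi\perp\varphi_1$, which need not be diagonal, and use Krull--Schmidt uniqueness to track the single summand $\langle c_1\rangle$ and its pairwise non-isomorphic conjugates $\langle bc_1\rangle$, $b\in H$ --- in effect inlining the mechanism behind Proposition \ref{prop:1}(iii)/Corollary \ref{cor:1}(i) rather than invoking it. This makes your version of (ii) self-contained and repairs the imprecision in the paper's reduction. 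One quibble: the rigidity of one-dimensional forms holds for all $d\geq 2$, not just $d\geq 3$; what genuinely needs $d\geq 3$ is only the Krull--Schmidt theorem.
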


\begin{proof} (i) The group $H$ canonically acts on $P_d(k)$. Let $\Phi$ be a form over $k$
such that there are only finitely many
elements $a_i\in k^\times/k^{\times d}$ with $a_i \Phi \not \cong \Phi$. Then $K=G(\Phi)=
\{h\in H \,|\, h\Phi\cong \Phi \}$ is the stabilizer of $\Phi$ and $H=a_1 K\cup \dots \cup a_n K$ is
 the disjoint (by assumption finite) union of its cosets. $H$ acts (on the left) on the set of (left) cosets of $K$.
Thus $\Phi\perp \widetilde{\Phi}=\Phi\perp (\perp_{a_i}\Phi)$ is an $H$-form over $k$.\\
(ii) The additive inverse of a diagonal form again is a diagonal form.
 For infinite $H$ there are no diagonal $H$-forms of degree $d\geq 3$ over $k$ (Corollary \ref{cor:1}).
\end{proof}

\begin{remark} (i)  Only forms $\varphi$ with finite index $[H:G(\varphi)]$ have an additive inverse.\\
(ii) At first glance, the case that the subgroup $H$ is largest possible looks like  the most
interesting one, since in this case $H$-forms are exactly the round universal forms, which resembles the situation
in the classical case. We will see later that this does not seem to be the case (see Propositions \ref{prop:3} (iii), \ref{prop:4}(iii)).
 However, we immediately note that an $H_{\rm max}$-form does not need to stay an $H_{\rm max}$-form
under field extensions. Thus we cannot define a base change homomorphism $W_d(k,H_{\rm max})\rightarrow  W_d(l,H_{\rm max})$.

It is possible, however, to define traces. For $d> 2$ the form $s_*(\varphi)$ is an
$H_{\rm max}$-form for each $H_{\rm max}$-form $\varphi$ over $l$. Thus $s_*$ induces a homomorphism of the semigroups
$$W_d(l,H_{\rm max})\rightarrow  W_d(k,H_{\rm max})$$
 for any $k$-linear map $s:l\rightarrow  k$.\\
(iii) Another way to describe $W_d(k,H_{\rm max})$ is the following: Let $F$ be the semi-ideal in the Witt-Grothendiek ring
$ \widehat{W}_d(k)$ generated by all round universal forms over $k$, then $W_d(k,H_{\rm max})= \widehat{W}_d(k)/F$.
If $H_{\rm max}$ is finite, $F$ is a subgroup, not just a semisubgroup, hence an ideal.
\end{remark}

 There seem to be some disadvantages to the definition of $W_d(k,H)$. Initially, the most striking perhaps is the fact that
in the special case that the degree is 2; i.e., in case we look at quadratic forms, the classical Witt ring $W(k)$
of quadratic forms over $k$ in general does not coincide with any Witt ring of the kind $W_d(k,H)$.
 More precisely, there is a canonical surjective
ring homomorphism $W(k)\rightarrow  W_d(k,H)$ which has the annihilator
${\rm Ann}_{W(k)}\{\langle 1\rangle- \langle h\rangle \,|\, h\in H\}$ as its kernel \cite{H-P}.
This is due to the fact that certain anisotropic quadratic forms are also quadratic $H$-forms,
not only hyperbolic quadratic forms.

When we consider equivalent diagonal forms, it suffices to only use diagonal $H$-forms:

\begin{theorem} \label{thm:2}
Let $H=\{a_1,\dots,a_s\}$ be non-trivial. \\
(i) $\langle b_1,\dots,b_m\rangle\sim_H \langle d_1,\dots,d_s\rangle$
if and only if there are two diagonal $H$-forms $\Phi$ and $\Psi$ such that
$\langle b_1,\dots,b_m\rangle\perp  \Phi \cong \langle d_1,\dots,d_s\rangle\perp  \Psi.$
\\ (ii) $\langle b_1,\dots,b_m\rangle\sim_H \langle d_1,\dots,d_s\rangle$
if and only if
$$\begin{array}{l}
\langle b_1,\dots,b_m\rangle \perp  \langle e_1\rangle \otimes\langle a_{1},\dots,a_{s}\rangle
\perp\dots\perp  \langle e_n\rangle\otimes\langle a_{1},\dots,a_{s}\rangle \cong \\
\langle d_1,\dots,d_s\rangle\perp
 \langle c_1\rangle\otimes\langle a_{1},\dots,a_s\rangle
\perp\dots\perp  \langle c_t\rangle \otimes\langle a_{1},\dots,a_{s}\rangle
\end{array}$$
for suitable $e_i,c_i\in k^\times$.
\\ (iii) Let $H$ be maximal. Then
$\langle b_1,\dots,b_m\rangle\sim_H \langle d_1,\dots,d_s\rangle$
if and only if
$$\langle b_1,\dots,b_m\rangle\cong \langle d_1,\dots,d_s\rangle\perp l\times \langle a_1,\dots,a_s \rangle$$
or
$$\langle d_1,\dots,d_s\rangle\cong \langle b_1,\dots,b_m\rangle\perp l\times \langle a_1,\dots,a_s \rangle$$
for some $l\geq 0$.
\end{theorem}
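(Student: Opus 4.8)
The plan is to treat (i) as the substantial statement and to obtain (ii) and (iii) from it. The ``if'' direction of (i) is immediate: a diagonal $H$-form is in particular an $H$-form, so an isomorphism $\langle b_1,\dots,b_m\rangle\perp\Phi\cong\langle d_1,\dots,d_s\rangle\perp\Psi$ with diagonal $H$-forms $\Phi,\Psi$ already witnesses $\langle b_1,\dots,b_m\rangle\sim_H\langle d_1,\dots,d_s\rangle$. The content is the ``only if'' direction: given $H$-forms $\varphi_1,\varphi_2$ with $\langle b_1,\dots,b_m\rangle\perp\varphi_1\cong\langle d_1,\dots,d_s\rangle\perp\varphi_2$, I must replace $\varphi_1,\varphi_2$ by diagonal $H$-forms.

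The observation I would isolate first is that, for $d\geq3$ and non-trivial $H$, the diagonal part of an $H$-form is again a diagonal $H$-form. By the Krull--Schmidt theorem \cite[2.3]{H}, every form has a well-defined decomposition into indecomposables, so I may write an $H$-form $\varphi$ as $\varphi_{\mathrm{diag}}\perp\varphi'$, where $\varphi_{\mathrm{diag}}$ is the orthogonal sum of the one-dimensional summands and $\varphi'$ that of the summands of dimension $\geq2$. Since $\varphi$ is an $H$-form, for each $a\in H$ the isomorphism $a\varphi\cong\varphi$ carries the decomposition $\perp_i\psi_i$ to $\perp_i a\psi_i$; as multiplication by $a$ preserves dimensions, Krull--Schmidt forces $H$ to permute the one-dimensional summands among themselves and the higher-dimensional ones among themselves. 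Hence $a\varphi_{\mathrm{diag}}\cong\varphi_{\mathrm{diag}}$ for all $a\in H$, i.e. $\varphi_{\mathrm{diag}}$ is a diagonal $H$-form. By Proposition \ref{prop:1}(i) no one-dimensional form is an $H$-form for non-trivial $H$, so $\varphi_{\mathrm{diag}}$ indeed captures the whole diagonal contribution and no one-dimensional summand is hidden inside an indecomposable $H$-summand.

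With this in hand, (i) follows by comparing one-dimensional summands on the two sides of $\langle b_1,\dots,b_m\rangle\perp\varphi_1\cong\langle d_1,\dots,d_s\rangle\perp\varphi_2$. Krull--Schmidt gives equality of the multisets of indecomposable summands, and restricting to the one-dimensional ones yields
$$\langle b_1,\dots,b_m\rangle\perp(\varphi_1)_{\mathrm{diag}}\cong\langle d_1,\dots,d_s\rangle\perp(\varphi_2)_{\mathrm{diag}},$$
so $\Phi=(\varphi_1)_{\mathrm{diag}}$ and $\Psi=(\varphi_2)_{\mathrm{diag}}$ are the required diagonal $H$-forms. Part (ii) is then a restatement of (i) via Proposition \ref{prop:1}(iii): every diagonal $H$-form is an orthogonal sum of copies of $\langle e\rangle\otimes\langle a_1,\dots,a_s\rangle$, and writing $\Phi$ and $\Psi$ in this shape reproduces exactly the displayed isomorphism.

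For (iii) I would specialize to maximal $H$, where $\langle a_1,\dots,a_s\rangle$ is round and universal and $\langle e\rangle\otimes\langle a_1,\dots,a_s\rangle\cong\langle a_1,\dots,a_s\rangle$ by Proposition \ref{prop:1}(ii), so (ii) reduces to an isomorphism $\langle b_1,\dots,b_m\rangle\perp l_1\times\langle a_1,\dots,a_s\rangle\cong\langle d_1,\dots,d_s\rangle\perp l_2\times\langle a_1,\dots,a_s\rangle$. As all forms here are diagonal, the isomorphism criterion for diagonal forms of degree $d\geq3$ recalled in Section 1 lets me cancel $\min(l_1,l_2)$ common copies of $\langle a_1,\dots,a_s\rangle$, leaving $\langle b_1,\dots,b_m\rangle\cong\langle d_1,\dots,d_s\rangle\perp l\times\langle a_1,\dots,a_s\rangle$ when $l_1\geq l_2$ and the symmetric statement when $l_2\geq l_1$, with $l=|l_1-l_2|$; the converse is immediate, since any multiple of the round universal form $\langle a_1,\dots,a_s\rangle$ is an $H$-form. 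The step I expect to be the main obstacle is the observation of the second paragraph, that the diagonal part of an $H$-form remains an $H$-form: the crux is to argue cleanly that the $H$-action respects the Krull--Schmidt decomposition dimension-by-dimension, which is what permits projecting $\varphi_1,\varphi_2$ onto their diagonal parts without destroying $H$-invariance. The remaining steps are bookkeeping with Krull--Schmidt together with the structural results of Theorem \ref{thm:1} and Proposition \ref{prop:1}.
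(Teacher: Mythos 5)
Your proof is correct, but it is organized around a different key lemma than the paper's own argument, so a comparison is worthwhile. The paper proves (i) and (ii) in a single pass: it invokes Theorem \ref{thm:1} to write the auxiliary $H$-forms $\varphi_1,\varphi_2$ explicitly as orthogonal sums of indecomposable $H$-forms and blocks $\Phi_i\otimes\langle a_{i_1},\dots,a_{i_t}\rangle$ built on indecomposables that are not $H$-forms, then applies Krull--Schmidt twice, first cancelling the indecomposable $H$-forms of dimension $>1$ against each other, then cancelling the blocks whose indecomposable factor has dimension $>1$; what survives are exactly the blocks $\langle e_i\rangle\otimes\langle a_1,\dots,a_s\rangle$ by Proposition \ref{prop:1}(ii), which yields (i) and (ii) simultaneously, and (iii) follows by one more application of Krull--Schmidt together with Proposition \ref{prop:1}(iii). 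You instead isolate the lemma that the diagonal part of an $H$-form is itself a diagonal $H$-form --- since $a\varphi\cong\varphi$ and multiplication by $a$ preserves dimension, Krull--Schmidt forces the $H$-action to permute the one-dimensional summands among themselves --- and this gives (i) with no appeal to Theorem \ref{thm:1} at all; the classification enters only afterwards, via Proposition \ref{prop:1}(iii) for (ii) and Proposition \ref{prop:1}(ii) plus cancellation of diagonal entries for (iii). What your route buys is a more self-contained part (i) and a reusable ``projection onto the diagonal part'' lemma; what the paper's route buys is that the explicit shape of the cancelling forms required in (ii) appears immediately rather than being assembled in a second step. Both arguments run on the same engine, the Krull--Schmidt theorem, and hence both implicitly assume $d\geq 3$; your parenthetical worry about a one-dimensional summand being ``hidden inside'' an indecomposable $H$-summand is vacuous (one-dimensional forms are themselves indecomposable, so Krull--Schmidt already separates them) and could be dropped.
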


\begin{proof} The main arguments used in the proof are the Theorem of Krull-Schmidt, Theorem \ref{thm:1} and Corollary \ref{cor:1}.
Let $\langle b_1,\dots,b_m\rangle\sim_H \langle d_1,\dots,d_s\rangle$.
By definition and by Theorem \ref{thm:1},
this means there are $H$-forms
$$\Phi\cong \widetilde{\Phi_1}\perp \dots\perp  \widetilde{\Phi_r}\perp \Phi_1\otimes \langle a_{i_1},\dots,a_{i_t}\rangle
\perp\dots\perp \Phi_v\otimes \langle a_{j_1},\dots,a_{j_w}\rangle$$
and
$$\Psi\cong \widetilde{\Psi_1}\perp \dots\perp  \widetilde{\Psi_l}\perp \Psi_1\otimes\langle a_{i_1},\dots,a_{i_y}\rangle
\perp\dots\perp \Psi_u \otimes\langle a_{j_1},\dots,a_{j_z}\rangle$$
with the $\widetilde{\Phi_i}$'s and the $\widetilde{\Psi_i}$'s being indecomposable $H$-forms, hence of dimension greater than
1, and with the $\Phi_i$'s and the $\Psi_i$'s being indecomposable forms which are no $H$-forms,
such that $\langle b_1,\dots,b_m\rangle \perp \Phi\cong \langle d_1,\dots,d_s\rangle\perp \Psi$. By Krull-Schmidt,
this implies that $r=l$ and
$\widetilde{\Phi_1}\perp \dots\perp  \widetilde{\Phi_r}\cong \widetilde{\Psi_1}\perp \dots\perp  \widetilde{\Psi_l}$
(all are indecomposable forms of dimension greater than 1 and $H$-forms), thus
$$\begin{array}{l}
\langle b_1,\dots,b_m\rangle \perp  \Phi_1\otimes\langle a_{i_1},\dots,a_{i_t}\rangle
\perp\dots\perp \Phi_v\otimes\langle a_{j_1},\dots,a_{j_w}\rangle \cong \\
 \langle d_1,\dots,d_s\rangle\perp
\Psi_1\otimes\langle a_{i_1},\dots,a_{i_y}\rangle
\perp\dots\perp \Psi_u \otimes\langle a_{j_1},\dots,a_{j_z}\rangle.
\end{array}$$
By Krull-Schmidt again, we can ``sort out'' also all those indecomposable
components containing $\Phi_i$'s and $\Psi_i$'s which have dimension greater than 1 now (their sums must be isomorphic as well) and
obtain that
$$\begin{array}{l}
\langle b_1,\dots,b_m\rangle \perp  \Phi_{i_1}\otimes\langle a_{1},\dots,a_{s}\rangle
\perp\dots\perp \Phi_{i_m}\otimes\langle a_{1},\dots,a_{s}\rangle \cong \\
\langle d_1,\dots,d_s\rangle\perp
\Psi_{h_1}\otimes\langle a_{1},\dots,a_s\rangle
\perp\dots\perp \Psi_{h_t} \otimes\langle a_{1},\dots,a_{s}\rangle
\end{array}$$
with all the remaining $\Phi$'s and $\Psi$'s of dimension 1, by Proposition \ref{prop:1} (ii). This proves (i) and (ii).
 Applying Krull-Schmidt
again and using Proposition \ref{prop:1}(iii), (iii) is  trivial.
\end{proof}

\begin{proposition} \label{prop:4}
Let $H=\{a_1,\dots,a_s\}$ be non-trivial. Let $\varphi_1$ and $\varphi_2$ be two separable forms,
$\varphi_1\cong \langle b_1,\dots,b_m\rangle\perp \varphi'_1$ and $\varphi_1\cong \langle d_1,\dots,d_s\rangle\perp
\varphi'_2$ where
$m\geq 0$, $s\geq 0$, and where the remaining part $\varphi'_i$ of $\varphi_i$ consists of indecomposable components of dimension greater than 1. \\
(i) $\varphi_1\sim_H \varphi_2$ if and only if there are two separable $H$-forms $\Phi$ and $\Psi$ such that
$\varphi_1\perp  \Phi \cong \varphi_2\perp  \Psi.$
\\ (ii) $\varphi_1\sim_H \varphi_2$ if and only if  $\langle b_1,\dots,b_m\rangle\sim_H \langle d_1,\dots,d_s\rangle$
and there are two separable $H$-forms $\Phi'$ and $\Psi'$, which do not contain any
indecomposable components of dimension 1, such that
$$\varphi'_1\perp  \Phi' \cong \varphi'_2\perp  \Psi'.$$
\\ (iii) Let $H$ be maximal. Then
 $\langle b_1,\dots,b_m\rangle\sim_H \langle d_1,\dots,d_s\rangle$
if and only if
$$\langle b_1,\dots,b_m\rangle\cong \langle d_1,\dots,d_s\rangle\perp l\times \langle a_1,\dots,a_s \rangle$$
or
$$\langle d_1,\dots,d_s\rangle\cong \langle b_1,\dots,b_m\rangle\perp l\times \langle a_1,\dots,a_s \rangle$$
for some $l\geq 0$ and
$$\varphi'_1 \cong \varphi'_2\perp  \Psi' \text{  or }\varphi'_2 \cong \varphi'_1\perp  \Phi'$$
for a suitable separable $H$-form $\Phi'$ or $\Psi'$ which does not contain any indecomposable component of dimension 1.
\end{proposition}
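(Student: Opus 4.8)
The plan is to prove Proposition \ref{prop:4} as the separable analogue of Theorem \ref{thm:2}, so the engine is identical: the Krull--Schmidt theorem, together with the structural descriptions of $H$-forms (Theorem \ref{thm:1}) and, crucially, of \emph{separable} $H$-forms (Proposition \ref{prop:2}). Throughout I would lean on one auxiliary observation, which I will call the \emph{grading lemma}: scaling an indecomposable component by $a\in H$ changes neither its separability (over $\bar k$ the factor $a$ is a $d$th power, so $a\psi\cong\psi$ there, and diagonalizability over $\bar k$ is preserved) nor its dimension. Hence if $\Phi$ is an $H$-form, so that $a\Phi\cong\Phi$ for all $a\in H$, then by uniqueness of the Krull--Schmidt decomposition the separable part $\Phi_{\mathrm{sep}}$ and each dimension-graded part of $\Phi$ are again $H$-invariant, i.e.\ again $H$-forms.

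For (i), in the forward direction I would start from the definition of $\sim_H$: there are $H$-forms $\Phi,\Psi$ with $\varphi_1\perp\Phi\cong\varphi_2\perp\Psi$. Since $\varphi_1,\varphi_2$ are separable, I split $\Phi=\Phi_{\mathrm{sep}}\perp\Phi_{\mathrm{ns}}$ and $\Psi=\Psi_{\mathrm{sep}}\perp\Psi_{\mathrm{ns}}$ into separable and non-separable indecomposable components. All non-separable components on each side come from $\Phi_{\mathrm{ns}}$, resp.\ $\Psi_{\mathrm{ns}}$, so Krull--Schmidt forces $\Phi_{\mathrm{ns}}\cong\Psi_{\mathrm{ns}}$; cancelling these leaves $\varphi_1\perp\Phi_{\mathrm{sep}}\cong\varphi_2\perp\Psi_{\mathrm{sep}}$, and by the grading lemma $\Phi_{\mathrm{sep}},\Psi_{\mathrm{sep}}$ are separable $H$-forms. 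The reverse direction is immediate.

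For (ii), I first invoke (i) to obtain \emph{separable} $H$-form witnesses $\Phi,\Psi$. By Proposition \ref{prop:2} I decompose each into its dimension-$1$ part and its higher-dimensional part, $\Phi=\Phi_{\dim 1}\perp\Phi'$ and $\Psi=\Psi_{\dim 1}\perp\Psi'$; by the grading lemma all four pieces are again separable $H$-forms, and $\Phi_{\dim 1},\Psi_{\dim 1}$ are \emph{diagonal} $H$-forms, hence of the shape furnished by Proposition \ref{prop:1}(iii). Feeding this into $\varphi_1\perp\Phi\cong\varphi_2\perp\Psi$ and applying Krull--Schmidt to separate dimension-$1$ from higher-dimensional components yields two isomorphisms: the dimension-$1$ one, $\langle b_1,\dots,b_m\rangle\perp\Phi_{\dim 1}\cong\langle d_1,\dots,d_s\rangle\perp\Psi_{\dim 1}$, witnesses $\langle b_1,\dots,b_m\rangle\sim_H\langle d_1,\dots,d_s\rangle$, while the higher-dimensional one is exactly $\varphi'_1\perp\Phi'\cong\varphi'_2\perp\Psi'$ with $\Phi',\Psi'$ separable $H$-forms free of dimension-$1$ components. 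The converse recombines the two witness pairs into a single one, which is again a separable $H$-form.

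Finally, for (iii) I specialize (ii) to maximal $H$. The diagonal condition is converted, via Theorem \ref{thm:2}(iii), into the stated form ``one of $\langle b_1,\dots,b_m\rangle,\langle d_1,\dots,d_s\rangle$ is the other $\perp\, l\times\langle a_1,\dots,a_s\rangle$''. For the higher-dimensional part I cancel the common components of $\varphi'_1$ and $\varphi'_2$ and then use Krull--Schmidt to collect the residual $H$-form components on a single side, producing $\varphi'_1\cong\varphi'_2\perp\Psi'$ or $\varphi'_2\cong\varphi'_1\perp\Phi'$. I expect \emph{this last reduction to be the main obstacle}: passing from the symmetric relation $\varphi'_1\perp\Phi'\cong\varphi'_2\perp\Psi'$ to a one-sided containment is not pure cancellation, since the ``excess'' components of $\varphi'_1$ and $\varphi'_2$ need not individually form $H$-forms. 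The way through is to exploit maximality: by Corollary \ref{cor:1}(iii) the form $\langle a_1,\dots,a_s\rangle\otimes\psi$ is an $H$-form for \emph{every} $\psi$, so each conjugate-orbit packet of trace components (Proposition \ref{prop:2}(c)) can be completed to an $H$-form, and the roundness and universality of $\langle a_1,\dots,a_s\rangle$ then let one absorb the excess of one form into an $H$-form summand of the other. Carefully matching the conjugate orbits of the indecomposable trace components $tr_{l/k}\langle c\rangle$ is the delicate bookkeeping here; the repeated appeals to the grading lemma, by contrast, are routine.
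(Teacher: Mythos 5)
For parts (i) and (ii) your argument is correct and is essentially the paper's own proof: the paper likewise starts from the Theorem \ref{thm:1} decomposition of the witnesses $\Phi,\Psi$, applies Krull--Schmidt once to cancel the non-separable indecomposable components (giving (i)), and a second time to separate the dimension-one components from the higher-dimensional ones, quoting Theorem \ref{thm:2} for the diagonal part (giving (ii)). Your ``grading lemma'' is exactly the observation the paper uses implicitly: multiplication by $a\in H$ permutes the Krull--Schmidt components and preserves both separability and dimension, so the separable part, the dimension-one part, and the higher-dimensional part of an $H$-form are again $H$-forms.

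Part (iii) is where the proposal has a genuine gap, and your own suspicion that this step is ``the main obstacle'' is well founded: the passage from the symmetric relation $\varphi'_1\perp\Phi'\cong\varphi'_2\perp\Psi'$ to a one-sided containment cannot be carried out, because the one-sided statement is false in general, so no absorption argument based on Corollary \ref{cor:1}(iii) can succeed. Concretely, take $k=\mathbb{F}_7$, $d=3$, $H=k^\times/k^{\times 3}=\{1,a,a^2\}$ maximal, and put $\varphi_1=\varphi'_1=\langle 1,a,a^2\rangle\otimes tr_{\mathbb{F}_{7^2}/\mathbb{F}_7}\langle 1\rangle$ and $\varphi_2=\varphi'_2=\langle 1,a,a^2\rangle\otimes tr_{\mathbb{F}_{7^4}/\mathbb{F}_7}\langle 1\rangle$. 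Both are separable $H$-forms with no dimension-one components (Corollary \ref{cor:1}(iii)), so taking $\Phi=\varphi_2$ and $\Psi=\varphi_1$ as witnesses shows $\varphi_1\sim_H\varphi_2$, and the diagonal condition of (iii) holds vacuously with $l=0$; yet neither $\varphi'_1\cong\varphi'_2\perp\Psi'$ nor $\varphi'_2\cong\varphi'_1\perp\Phi'$ is possible for any choice of $\Psi'$ or $\Phi'$, since by Krull--Schmidt the left-hand side would have to contain indecomposable components of dimension $2$ (respectively $4$) which it does not have. The structural reason the one-sided form is available for the diagonal parts (Theorem \ref{thm:2}(iii)) but not here is that for maximal $H$ there is only \emph{one} diagonal packet up to isomorphism, namely $\langle a_1,\dots,a_s\rangle$ (Proposition \ref{prop:1}(ii)), so diagonal $H$-forms form a chain and the excesses on the two sides are always comparable; the higher-dimensional separable packets $\langle a_1,\dots,a_s\rangle\otimes tr_{l/k}\langle c\rangle$ come in many isomorphism types, and the two excesses can be built from different ones. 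You should know that the paper's own proof of (iii) is a bare appeal to Theorem \ref{thm:2} and does not address this point either; the one-sided conclusion does become correct (indeed trivial, with $\Phi'=\Psi'=0$ and $\varphi'_1\cong\varphi'_2$) if one additionally assumes that $\varphi'_1$ and $\varphi'_2$ are $H$-reduced, by the uniqueness of the $H$-reduced representative for $d>2$.
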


\begin{proof} The main arguments in the proof are again the Theorem of Krull-Schmidt and Theorems  \ref{thm:1} and Proposition  \ref{prop:1}.
The proof is then analogous to the one of Theorem \ref{thm:2}. 
By definition and by Theorem \ref{thm:1}, $\varphi_1\sim_H \varphi_2$ means there are $H$-forms
$$\Phi\cong \widetilde{\Phi_1}\perp \dots\perp  \widetilde{\Phi_r}\perp \Phi_1\otimes \langle a_{i_1},\dots,a_{i_t}\rangle
\perp\dots\perp \Phi_v\otimes \langle a_{j_1},\dots,a_{j_w}\rangle$$
and
$$\Psi\cong \widetilde{\Psi_1}\perp \dots\perp  \widetilde{\Psi_l}\perp \Psi_1\otimes\langle a_{i_1},\dots,a_{i_y}\rangle
\perp\dots\perp \Psi_u \otimes\langle a_{j_1},\dots,a_{j_z}\rangle$$
with the $\widetilde{\Phi_i}$'s and the $\widetilde{\Psi_i}$'s being indecomposable $H$-forms, hence of dimension greater than
1, and with the $\Phi_i$'s and the $\Psi_i$'s being indecomposable forms which are no $H$-forms,
such that $\varphi_1 \perp \Phi\cong \varphi_2\perp \Psi$.
By Krull-Schmidt,
we can first eliminate all indecomposable components which are not separable (their respective sums must be isomorphic),
proving (i).
By Krull-Schmidt again, we can ``sort out'' also all those indecomposable
components of $\Phi$ and $\Psi$ which have dimension greater than 1 now (their sums, including $\varphi_1$ and $\varphi_2$,
 must be isomorphic as well) and thus, with  Theorem \ref{thm:2} 
 obtain (ii) and (iii).
\end{proof}

The Theorem of Krull-Schmidt  does not allow for any interaction between indecomposable components
of different dimensions. Thus it makes sense to only investigate Witt rings of $H$-equivalent diagonal forms or separable forms, etc., and hope to get results
on the structure theory this way, and to also restrict the equivalence notions to the $H$-forms of corresponding type.

Similar observations hold for forms which are the sum of indecomposable components of
other types, e.g. of forms which are the
sums of traces of indecomposable forms of dimension 2 (or higher), or of absolutely indecomposable forms:

\begin{proposition} \label{prop:5}
Let $H=\{a_1,\dots,a_s\}$ be non-trivial. Let $\varphi_1$ and $\varphi_2$ be two forms, whose indecomposable
components are absolutely indecomposable. Then
 $\varphi_1\sim_H \varphi_2$ if and only if there are two $H$-forms
$$\Phi\cong \widetilde{\Phi_1}\perp \dots\perp  \widetilde{\Phi_r}\perp \Phi_1\otimes \langle a_{i_1},\dots,a_{i_t}\rangle
\perp\dots\perp \Phi_v\otimes \langle a_{j_1},\dots,a_{j_w}\rangle$$
and
$$\Psi\cong \widetilde{\Psi_1}\perp \dots\perp  \widetilde{\Psi_l}\perp \Psi_1\otimes\langle a_{i_1},\dots,a_{i_y}\rangle
\perp\dots\perp \Psi_u \otimes\langle a_{j_1},\dots,a_{j_z}\rangle$$
with the $\widetilde{\Phi_i}$'s and the $\widetilde{\Psi_i}$'s being absolutely indecomposable $H$-forms, hence of dimension greater than
1, and with the $\Phi_i$'s and the $\Psi_i$'s being absolutely indecomposable forms which are no $H$-forms, such that
$\varphi_1\perp  \Phi \cong \varphi_2\perp  \Psi$.
\end{proposition}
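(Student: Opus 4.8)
The plan is to follow the pattern of the proofs of Theorem \ref{thm:2} and Proposition \ref{prop:4}, with the Theorem of Krull-Schmidt as the main engine and Theorem \ref{thm:1} supplying the shape of the auxiliary $H$-forms. The backward implication is immediate: if $H$-forms $\Phi$ and $\Psi$ of the stated shape exist with $\varphi_1\perp\Phi\cong\varphi_2\perp\Psi$, then $\varphi_1\sim_H\varphi_2$ by the very definition of $H$-equivalence. So essentially all of the work lies in the forward direction.

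For the forward direction I would start from the definition: $\varphi_1\sim_H\varphi_2$ gives $H$-forms $\Phi,\Psi$ with $\varphi_1\perp\Phi\cong\varphi_2\perp\Psi$, and Theorem \ref{thm:1} lets me write each of them as an orthogonal sum of indecomposable $H$-forms $\widetilde{\Phi_i},\widetilde{\Psi_i}$ (each of dimension greater than $1$ by Proposition \ref{prop:1}(i), since $H\neq\{1\}$) together with blocks $\Phi_j\otimes\langle a_{\dots}\rangle$ and $\Psi_j\otimes\langle a_{\dots}\rangle$ whose tensor factors $\Phi_j,\Psi_j$ are indecomposable non-$H$-forms. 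The ingredient specific to this setting is the observation that each of these Theorem \ref{thm:1} blocks is homogeneous with respect to absolute indecomposability: scaling a form by a nonzero scalar neither creates nor destroys a decomposition over any algebraic extension, so every conjugate $a\Phi_j$ is absolutely indecomposable exactly when $\Phi_j$ is. Hence each summand appearing in the decompositions of $\Phi$ and $\Psi$ has either all of its indecomposable components absolutely indecomposable, or none of them.

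This homogeneity lets me split $\Phi\cong\Phi_{\mathrm{abs}}\perp\Phi_{\mathrm{nab}}$ and $\Psi\cong\Psi_{\mathrm{abs}}\perp\Psi_{\mathrm{nab}}$, collecting the blocks of the two types; since an orthogonal sum of $H$-forms is again an $H$-form, all four pieces are $H$-forms. Applying Krull-Schmidt to $\varphi_1\perp\Phi_{\mathrm{abs}}\perp\Phi_{\mathrm{nab}}\cong\varphi_2\perp\Psi_{\mathrm{abs}}\perp\Psi_{\mathrm{nab}}$, and using that $\varphi_1$ and $\varphi_2$ contribute only absolutely indecomposable components, I conclude that the non-absolutely-indecomposable components on the two sides can only be matched against each other, so $\Phi_{\mathrm{nab}}\cong\Psi_{\mathrm{nab}}$. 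Cancelling these isomorphic summands (again by Krull-Schmidt) yields $\varphi_1\perp\Phi_{\mathrm{abs}}\cong\varphi_2\perp\Psi_{\mathrm{abs}}$, and $\Phi_{\mathrm{abs}},\Psi_{\mathrm{abs}}$ are $H$-forms of precisely the asserted shape, with all tensor factors absolutely indecomposable.

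The step I expect to require the most care is the bookkeeping in this cancellation: one must be sure that the separation into absolutely and non-absolutely indecomposable parts respects the block structure coming from Theorem \ref{thm:1}, so that the discarded pieces $\Phi_{\mathrm{nab}},\Psi_{\mathrm{nab}}$ remain genuine $H$-forms while the retained pieces still decompose as a sum of indecomposable $H$-forms and conjugate blocks. This is exactly what block-homogeneity guarantees, and it is the feature that distinguishes the argument from a verbatim repetition of Proposition \ref{prop:4}; everything else is the same Krull-Schmidt sorting already carried out there.
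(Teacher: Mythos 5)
Your proof is correct and is essentially the argument the paper intends: the paper states this proposition without a proof of its own, presenting it as following by ``similar observations'' --- that is, the same Krull--Schmidt sorting used to prove Theorem \ref{thm:2} and Proposition \ref{prop:4} --- and your argument is exactly that sorting carried out explicitly. Your added observation that scaling by a nonzero constant preserves absolute indecomposability, so that each block supplied by Theorem \ref{thm:1} is homogeneous with respect to this property, is precisely the detail needed to make the analogy rigorous.
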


\subsection{The case that $H=\{1,-1\}$}

 Let $d$ be even and $H=\{1,-1\}$. If $\Phi$ is a separable $H$-form, then it is an orthogonal sum of forms of the
kind
\begin{enumerate}
\item $\langle a,-a\rangle$,
\item $tr_{l/k}\langle b,-b\rangle$ with $l/k$ a separable field extension such that $|l^\times/l^{\times d}|\not=1$,
\item $tr_{l/k}\langle c\rangle$ with $l/k$ a separable field extension such that $|l^\times/l^{\times d}|=1$.
\end{enumerate}
By Theorem \ref{thm:2} 
$\langle b_1,\dots,b_m\rangle\sim_H \langle d_1,\dots,d_s\rangle$
if and only if
$$\begin{array}{l}
\langle b_1,\dots,b_m\rangle \perp  \langle e_1\rangle \otimes\langle 1,-1\rangle
\perp\dots\perp  \langle e_n\rangle\otimes\langle 1,-1\rangle \cong \\
\langle d_1,\dots,d_s\rangle\perp
 \langle c_1\rangle\otimes\langle 1,-1\rangle
\perp\dots\perp  \langle c_t\rangle \otimes\langle 1,-1\rangle
\end{array}$$
for suitable $e_i,c_i\in k^\times$. Hence, if $\langle b_1,\dots,b_m\rangle$ is anisotropic, it is $H$-reduced and
$\langle b_1,\dots,b_m\rangle\sim_H \langle d_1,\dots,d_s\rangle$ implies that $\langle b_1,\dots,b_m\rangle\cong
\langle d_1,\dots,d_s\rangle$ or $\langle d_1,\dots,d_s\rangle\cong \langle b_1,\dots,b_m\rangle \perp  \langle
e_{i_1}\rangle \otimes\langle 1,-1\rangle \perp\dots\perp  \langle e_{i_k}\rangle\otimes\langle 1,-1\rangle$.

\begin{remark} Suppose $n\times\langle 1\rangle\sim_H 0$. Then $n\times\langle 1\rangle\cong \langle e_1,-e_1,\dots,e_m,-e_m
\rangle$ implies the contradiction that $1\equiv -1\, {\rm mod}\, k^{\times d}$. Hence  for any
field $k$, $W_d^D(k,H)$ is not a torsion group. For $d=2$, this is only true if $k$ is formally real.
\end{remark}

\section{Some observation on a Witt semiring of diagonal forms and invariants of diagonal forms}\label{sec:4}

  If we want to define a Witt ring for diagonal forms, we need to specify which of them are going to be the ``hyperbolic'' ones, i.e. the forms we will cancel out in our definition. For this we have to take into consideration that there are no strongly
multiplicative forms of degree $d\geq 3$ which are diagonal other than
$\varphi\cong\langle 1\rangle$ \cite{Pu2}, and that there are no diagonal forms which are round and universal
and remain so under each field extension. So
it makes sense to define a ``hyperbolic'' diagonal form as a form which is round and universal.

 We also have to assume that
 $k^\times/k^{\times d}$ is finite, or else
there are no diagonal forms which are round and universal. This again takes us back to Harrison's and Pareigis'
original definition of the Witt ring, where $H$ is chosen to be maximal.

Let now $P^D_d(k)$ denote the set of all isomorphism classes of nondegenerate  diagonal forms of degree $d$ over $k$.
Define $$W^D_d(k,H)=P^D_d(k)/\sim_H$$
and let $$r^D_H:P^D_d(k)\rightarrow  W^D_d(k,H), \quad \Phi \rightarrow  [\Phi]$$
 denote the canonical map.
 Then $W^D_d(k,H)$ is a commutative semi-ring. $W^D_d(k,H)$ is the sub semiring of diagonal forms of $W_d(k,H)$; i.e.,
 $$W^D_d(k,H)=\{[\Phi]\in W_d(k,H)\,|\, \Phi \text{ a diagonal form } \}.$$
A diagonal form $\Psi$ of degree $d$ over $k$ has an additive inverse (which is again a diagonal form)
  if and only if there are only finitely many
  elements $a_i\in H$ such that $a_i\Psi\not=\Psi$; i.e., if its stabilizer has finite index in $H$. Thus  if $H$ is finite then $W^D_d(k,H)$
  is a ring.

 \begin{remark} An $H$-form can be isotropic or anisotropic:
   Let $d$ be an odd integer which is relatively prime to both $p$ and $q-1$,
where $q=p^r$ for some $r$. Then $u_{diag}(d,\mathbb{Q}_p)=|\mathbb{Q}_p^{\times}/\mathbb{Q}_p^{\times d}|=d$.
 The minimal (and maximal) dimension of a diagonal
anisotropic universal form of degree $d$ over $\mathbb{Q}_p$ is $d$. This form is uniquely determined (up to
isomorphism), round,  and given by $\langle  a_1,...,a_d\rangle$ \cite{Pu1}. For $m\geq 2$ the $H$-form
$m\times \langle a_1,\dots,a_d\rangle$ is isotropic.
\end{remark}

From now on and unless explicitly stated otherwise,  choose $$H=H_{max}=k^\times/k^{\times d}$$
to be maximal and suppose $H$ is non-trivial and finite. Let $\{a_1,\dots, a_s\}$ be a set of coset representatives of $k^\times/k^{\times d}$. 
Every diagonal $H$-form
$\varphi$ of degree $d$ is isomorphic to a form of the kind
$m\times \langle a_1,\dots,a_s\rangle$
 ($m$ any integer) and $ms={\rm dim }\,\varphi\not \in\{rd-1,rd+1\}$.

In this paper, we will think of the forms we want to cancel out in our definitions of Witt rings as the ``hyperbolic forms''. That means, here these are the $H$-forms, and the dimension of each such ''hyperbolic'' $H$-form is a multiple of its degree $p$. 
Since the dimension of each $H$-form is a multiple of its degree $p$, this allows us to define $\widetilde W_d(k, H )=W^D_d(k,H)$
and the map
$${\rm dim}: \widetilde W_d(k, H)\rightarrow  \mathbb{Z}_d.$$
which is called the {\it dimension index}. Put
 $$I_d={\rm ker}({\rm dim}),$$
  then $I_d$ is an ideal in the ring $ \widetilde W_r(k, H)$. A form $\Phi$ lies in $I_d$ if and only if
its dimension is a multiple of $d$. Thus as a group $I_d$ is generated by forms of the kind
$\langle b_1,\dots,b_d \rangle$
 with $b_i\in k^\times$, $I_d^2$ is generated by forms of the kind
$$\langle b_{1,1},\dots,b_{d,1}\rangle \otimes \langle b_{1,2},\dots,b_{d,2}\rangle$$
 and $I_d^n$ by forms of the kind
$$\langle b_{1,1},\dots,b_{d,1}\rangle \otimes \dots\otimes\langle b_{1,1},\dots,b_{d,n} \rangle$$
 with $b_{i,j}\in k^\times$.

 Given the space $M_n(k)$ of $n\times n$ matrices over $k$, the {\it permanent}
 ${\rm per}: M_n(k)\rightarrow  k$ is defined by the modified Leibniz formula
 $${\rm per}(a_{ij})=\sum_{\sigma\in S_n}\prod_{i=1}^{n}a_{i \sigma(j)}.$$
The permanent $d={\rm per}$ is a first degree cohomological invariant for separable forms and replaces the
 discriminant in this setting. For two separable forms
 $ \Phi,\Psi$ we have $d(\Phi\perp\Psi)=d(\Phi)d(\Psi)$ and
 $d(\Phi\otimes\Psi)=d(\Phi)^{{\rm dim} \Psi} d(\Psi)^{{\rm dim} \Phi}$
 \cite[4.15]{R}.

Since each diagonal $H$-form also has permanent $1$, the permanent induces
a surjective homomorphism
$$d:\widetilde W_d(k,H) \rightarrow  k^\times/k^{\times d}.$$
We have
$d(\langle b_1,\dots,b_d\rangle \otimes \langle c_1,\dots,c_d \rangle)=1$,
and thus obtain that $d:I_d/I_d^2\rightarrow  k^{\times}/k^{\times d}$
is a surjective group homomorphism:

\begin{lemma}\label{le:1}
The map
$d:I_d\rightarrow  k^{\times}/k^{\times d}$
 is a surjective group homomorphism and $I_d^2\subset {\rm ker}\,d$.
\end{lemma}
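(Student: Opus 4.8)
The plan is to derive all three assertions from the multiplicativity of the permanent recorded above, together with the generating sets for $I_d$ and $I_d^2$. Read correctly, the relation $d(\Phi\perp\Psi)=d(\Phi)d(\Psi)$ says exactly that $d$ is a homomorphism from the additive group $(\widetilde W_d(k,H),\perp)$ to the multiplicative group $k^\times/k^{\times d}$; this map is already well defined and surjective on all of $\widetilde W_d(k,H)$ because each diagonal $H$-form has permanent $1$. Since $I_d$ is the kernel of the dimension index, hence a subgroup of $(\widetilde W_d(k,H),\perp)$, restricting gives at once that $d\colon I_d\to k^\times/k^{\times d}$ is a group homomorphism. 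It is convenient to also record the explicit value $d(\langle b_1,\dots,b_n\rangle)=b_1\cdots b_n$ in $k^\times/k^{\times d}$, which follows by splitting off one-dimensional pieces through $d(\Phi\perp\Psi)=d(\Phi)d(\Psi)$ together with $d(\langle c\rangle)=c$.

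For surjectivity I would exhibit a preimage that genuinely lies in $I_d$; this is the one place where a little care is needed, since the obvious preimage $\langle c\rangle$ of a class $c$ is one-dimensional and hence not in $I_d$. Instead, given $c\in k^\times$, I would take the diagonal form $\langle c,1,\dots,1\rangle$ with $d-1$ trailing entries equal to $1$. Its dimension is $d$, so it lies in $I_d$, and its permanent is $c\cdot 1^{d-1}=c$; therefore $d\colon I_d\to k^\times/k^{\times d}$ is surjective.

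For the inclusion $I_d^2\subset\ker d$, since $d$ is a homomorphism it suffices to evaluate it on the generators $\langle b_1,\dots,b_d\rangle\otimes\langle c_1,\dots,c_d\rangle$ of $I_d^2$. Both factors have dimension $d$, so the tensor formula $d(\Phi\otimes\Psi)=d(\Phi)^{\dim\Psi}d(\Psi)^{\dim\Phi}$ gives
$$d(\langle b_1,\dots,b_d\rangle\otimes\langle c_1,\dots,c_d\rangle)=d(\langle b_1,\dots,b_d\rangle)^d\,d(\langle c_1,\dots,c_d\rangle)^d=1,$$
because every $d$-th power is trivial in $k^\times/k^{\times d}$. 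As each generator thus lies in $\ker d$, so does all of $I_d^2$. Overall none of these steps is a serious obstacle; the only subtlety is that surjectivity must be checked on $I_d$ itself rather than inherited from $\widetilde W_d(k,H)$, which is exactly why the dimension-$d$ witness $\langle c,1,\dots,1\rangle$ replaces the one-dimensional $\langle c\rangle$.
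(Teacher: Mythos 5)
Your proposal is correct and follows essentially the same route as the paper, whose justification is the discussion preceding the lemma: the relation $d(\Phi\perp\Psi)=d(\Phi)d(\Psi)$ together with the fact that diagonal $H$-forms have permanent $1$ gives the well-defined homomorphism, and the tensor formula $d(\Phi\otimes\Psi)=d(\Phi)^{\dim\Psi}d(\Psi)^{\dim\Phi}$ applied to the generators $\langle b_1,\dots,b_d\rangle\otimes\langle c_1,\dots,c_d\rangle$ yields $I_d^2\subset\ker d$. Your only addition is the explicit dimension-$d$ witness $\langle c,1,\dots,1\rangle$ for surjectivity on $I_d$, a detail the paper leaves implicit.
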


Indeed, the above definition of $\widetilde W_d(k, H)=P^D_d(k)/\sim_{\widetilde H}$ as
well as  Lemma \ref{le:1} 
stay true if we take any finite subgroup $H$ of
$k^\times/k^{\times d}$ with the corresponding requirements. It does not have to be maximal.

For the remainder of this section, let $k$ be a field such that $k^\times/k^{\times p}=\{1,a,\dots,a^{p-1}\}$ for a suitable $a\in K^\times$. Then any diagonal $H$-form of prime degree $p$
is isomorphic to a form of the type $m\times \langle 1,a,\dots,a^{p-1}\rangle$ and
$$d:I_p/I_p^2\rightarrow  k^{\times}/k^{\times p}$$
is a surjective group homomorphism.

\begin{lemma}  Let $p>2$ be a prime and $k^\times/k^{\times p}=\{1,a,\dots,a^{p-1}\}$. 
Then the factor group
$I_p^n/I_p^{n+1}$ has exponent $p$.
\end{lemma}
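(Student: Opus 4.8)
The plan is to run the exact analogue of the classical argument showing that $I^n/I^{n+1}$ is $2$-torsion in the quadratic Witt ring, with the role played there by $2\langle 1\rangle=\langle 1,1\rangle\in I$ taken over by the $p$-fold orthogonal sum $p\langle 1\rangle=\langle \underbrace{1,\dots,1}_{p}\rangle$. So the first and only real observation I would record is that $p\langle 1\rangle$ lies in $I_p$: its dimension equals $p\equiv 0\pmod p$, hence it is killed by the dimension index $\dim\colon\widetilde W_p(k,H)\to\mathbb{Z}_p$ and therefore belongs to $I_p=\ker(\dim)$.

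With this in hand the result is essentially formal. For any element $x$ of the ring $\widetilde W_p(k,H)$ one has $px=(p\langle 1\rangle)\cdot x$, since $\langle 1\rangle$ is the multiplicative unit and multiplication (tensor product) distributes over addition (orthogonal sum). Taking $x\in I_p^n$ and using that $I_p$ is an ideal, I would conclude
$$px=(p\langle 1\rangle)\cdot x\in I_p\cdot I_p^n=I_p^{n+1},$$
so multiplication by $p$ carries $I_p^n$ into $I_p^{n+1}$. Passing to the quotient, every element of $I_p^n/I_p^{n+1}$ is annihilated by $p$, which is precisely the exponent-$p$ statement. If one prefers to argue on the stated generators, the same computation reads $p(\tau_1\otimes\dots\otimes\tau_n)=(p\tau_1)\otimes\tau_2\otimes\dots\otimes\tau_n$ with $p\tau_1=\tau_1\otimes(p\langle 1\rangle)\in I_p^2$, and one then sums over generators.

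I do not anticipate a genuine obstacle: once $p\langle 1\rangle\in I_p$ is noted, the argument uses only the ring axioms and the fact that $I_p$ is an ideal. The one point I would verify carefully is that the dimension index is well defined on $\widetilde W_p(k,H)$ — which holds because every $H$-form, and hence every form we cancel, has dimension divisible by $p$ — and that $p\langle 1\rangle$ is a bona fide, generally nonzero, element of $I_p$ rather than being $H$-equivalent to $0$. The latter is clear because the diagonal $H$-forms are exactly the multiples of $\langle 1,a,\dots,a^{p-1}\rangle$, whose entry multiset differs from $\{1,\dots,1\}$ as soon as $a\not\equiv 1$; thus the argument is not vacuous.
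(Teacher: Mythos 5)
Your proof is correct and is essentially the paper's own argument: the paper proves the lemma by noting that $p$ times a generator $\langle a_1,\dots,a_p\rangle\otimes\dots\otimes\langle s_1,\dots,s_p\rangle$ of $I_p^n$ is isomorphic to that generator tensored with $\langle 1,\dots,1\rangle\in I_p$, hence lies in $I_p^{n+1}$. This is exactly your identity $px=(p\langle 1\rangle)\cdot x$ with $p\langle 1\rangle\in I_p$, merely stated on generators rather than packaged through the ring axioms for arbitrary elements.
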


\begin{proof} We have
$p\times (\langle a_1,\dots,a_p\rangle\otimes\dots\otimes\langle s_1,\dots,s_p\rangle)
\cong \langle a_1,\dots,a_p\rangle\otimes\dots\otimes\langle s_1,\dots,s_p\rangle\otimes\langle 1,\dots,1\rangle\in I_p^{n+1},$
hence is $0$ in $I_p^n/I_p^{n+1}$.
\end{proof}

 If $p=3$ we compute the following result for diagonal cubic forms over $k$:

\begin{theorem} \label{thm:3}
Let $k$ be a field such that $k^\times/k^{\times 3}=\{1,a,a^2\}$. 
Then the permanent $d$ induces a group isomorphism
$$I_3/I_3^2\cong k^{\times}/k^{\times 3}.$$
\end{theorem}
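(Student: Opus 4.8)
The plan is to turn the statement into a counting problem and then settle it by one explicit multiplication in the ring $\widetilde W_3(k,H)$. By Lemma \ref{le:1} the permanent already induces a \emph{surjective} group homomorphism $\bar d\colon I_3/I_3^2\to k^\times/k^{\times 3}$, and the target has order $3$. A surjection between finite groups of equal order is automatically an isomorphism, so it suffices to prove $|I_3/I_3^2|\le 3$.

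First I would make the additive picture explicit. Since $k^\times/k^{\times 3}=\{1,a,a^2\}$, every diagonal cubic form is isomorphic to a form built from $\langle 1\rangle,\langle a\rangle,\langle a^2\rangle$, and by the Theorem of Krull--Schmidt together with Theorem \ref{thm:2}(iii) the only identification imposed on these in $\widetilde W_3(k,H)$ is the hyperbolic relation $\langle 1\rangle+\langle a\rangle+\langle a^2\rangle=0$ (coming from $\langle 1,a,a^2\rangle$). Thus the additive group is $\mathbb{Z}\langle 1\rangle\oplus\mathbb{Z}\langle a\rangle$ with $\langle a^2\rangle=-\langle 1\rangle-\langle a\rangle$, the dimension index is $\dim(m\langle 1\rangle+n\langle a\rangle)=m+n\bmod 3$, and hence $I_3=\{m\langle 1\rangle+n\langle a\rangle\mid m+n\equiv 0\bmod 3\}$. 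A short index-$3$ computation shows that $I_3$ is generated as an abelian group by $\pi=\langle a\rangle-\langle 1\rangle$ and $\tau=\langle 1,1,1\rangle=3\langle 1\rangle$.

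The key step is a single tensor computation. Using $\langle a\rangle\otimes\langle a\rangle\cong\langle a^2\rangle$ and the relation above I get $\pi^2=\langle a^2\rangle-2\langle a\rangle+\langle 1\rangle=-3\langle a\rangle$, whence $\tau=-3\pi-\pi^2$. Since $\pi\in I_3$ forces $\pi^2\in I_3^2$, this gives $\tau\equiv -3\pi\pmod{I_3^2}$, so the class $\bar\pi$ alone generates $I_3/I_3^2$; in particular $I_3/I_3^2$ is cyclic. The exponent Lemma immediately preceding this theorem shows $3\bar\pi=0$, so this cyclic group has order dividing $3$, i.e. $|I_3/I_3^2|\le 3$. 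Combined with the surjectivity of $\bar d$ this forces $|I_3/I_3^2|=3$ and that $\bar d$ is an isomorphism; concretely $\bar d(\bar\pi)=d(\langle a\rangle)d(\langle 1\rangle)^{-1}=a$, a generator of $k^\times/k^{\times 3}$.

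I expect the main obstacle to be justifying the additive description in the second step: one must be certain that Theorem \ref{thm:2}(iii) yields \emph{exactly} the relation $\langle 1,a,a^2\rangle=0$ among diagonal forms, with no further hidden identifications, so that $\widetilde W_3(k,H)$ really is $\mathbb{Z}$-free of rank $2$ and the computation of $\pi^2$ can be carried out in this concrete ring. Once the ring is pinned down, everything else is a finite elementary calculation, and indeed the whole argument is the statement that, under the induced ring isomorphism $\widetilde W_3(k,H)\cong\mathbb{Z}[\omega]$ with $\omega=\langle a\rangle$, $I_3$ is the prime ideal $(1-\omega)$ above $3$ and $I_3/I_3^2\cong\mathbb{Z}[\omega]/(1-\omega)\cong\mathbb{F}_3$.
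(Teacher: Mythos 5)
Your proof is correct, and it takes a genuinely different route from the paper's. The paper also starts from Lemma \ref{le:1} (surjectivity and $I_3^2\subseteq\ker d$), but then proves injectivity of the induced map directly: by induction on dimension it shows that every diagonal cubic form lying in $I_3$ with permanent $1$ belongs to $I_3^2$, the key computation being the explicit $H$-equivalence $\langle a,a,a\rangle\sim_H\langle 1,1,a\rangle\otimes\langle 1,a,a\rangle\in I_3^2$, after which the induction step splits off such dimension-$3$ blocks and uses multiplicativity of the permanent on orthogonal sums. You instead determine the whole ring and count. The point you flag as the main obstacle is indeed the crux, and it is settled exactly as you indicate: by the classification of diagonal forms of degree $\geq 3$, $P^D_3(k)$ is the free commutative monoid on $\langle 1\rangle,\langle a\rangle,\langle a^2\rangle$ with $\langle x\rangle\otimes\langle y\rangle\cong\langle xy\rangle$, and Theorem \ref{thm:2}(iii) (applicable since $H=k^\times/k^{\times 3}$ is maximal here) says that $\sim_H$ identifies two diagonal forms precisely when they differ by copies of $\langle 1,a,a^2\rangle$; since $\sim_H$ is compatible with $\perp$ and $\otimes$, this yields $\widetilde W_3(k,H)\cong\mathbb{Z}[\omega]/(1+\omega+\omega^2)\cong\mathbb{Z}[\zeta_3]$, whence $I_3=(1-\omega)$, $I_3^2=(3)$ and $|I_3/I_3^2|=3$, as you claim. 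Your approach costs this extra structural step but buys more than the paper's: it identifies all the graded pieces $I_3^n/I_3^{n+1}\cong\mathbb{F}_3$ at once, and it generalizes verbatim to any prime $p$ with $k^\times/k^{\times p}$ cyclic of order $p$ (then $\widetilde W_p(k,H)\cong\mathbb{Z}[\zeta_p]$ and $I_p=(1-\zeta_p)$ is the prime above $p$), which is precisely the extension the paper describes as tedious for its inductive method. One small simplification: you do not actually need the exponent lemma, since $3\pi=\tau\pi$ is a product of two elements of $I_3$ and therefore lies in $I_3^2$ directly.
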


\begin{proof} It remains to show that each cubic form $\Phi$ over $k$ which lies in $I_3$ and has $d(\Phi)=1$
satisfies $\Phi\in I_3^2$.
We proceed by induction on the dimension on $\Phi$. If ${\rm dim}\,\Phi=3$, then $\Phi=\langle a,a,b\rangle$ (not all
entries can be different, since otherwise it would be an $H$-form) and $d(\langle a,a,b\rangle)=1$
if and only if $b\equiv a\,{\rm mod}\, k^{\times 3}$, so $\Phi=\langle a,a,a\rangle$.
 However,
$\langle a,a,a \rangle\in I_3^2$, because
$$\langle a,a,a\rangle\sim_H \langle 1,a,a\rangle \perp \langle 1,a,a\rangle
\perp \langle a,a^2,a^2\rangle\cong \langle 1,1,a\rangle\otimes\langle 1,a,a\rangle \in I_3^2.$$
 Let $\Phi=\langle a_1,a_1,b_1,c_1,c_1,d_1,\dots,g_1,g_1,h_1\rangle$ be a form of dimension $3m=n$. Then
$$\langle a_1,a_1,b_1,c_1,c_1,d_1,\dots,g_1,g_1,h_1\rangle\sim_H $$
$$\langle a_1,a_1,b_1\rangle\perp\langle 1,a_1,a_1^2\rangle
\perp\dots\perp \langle c_1,c_1,d_1,\dots,g_1,g_1,h_1\rangle\cong $$
$$\langle a_1,a_1,a_1 \rangle
\perp \langle 1,a_1^2,b_1 \rangle \perp\langle c_1,c_1,d_1,\dots,g_1,g_1,h_1\rangle.$$
 Put
 $$\varphi_2=\langle 1,a_1^2,b_1 \rangle \perp\langle c_1,c_1,d_1,\dots,g_1,g_1,h_1\rangle.$$
  Now $\varphi_1=\langle a_1,a_1,a_1 \rangle
\in I_3^2$, hence $d(\varphi_1)=1$ by the induction beginning, and we obtain
$d(\varphi_2)=d(\Phi)/d(\varphi_1)=1/1=1$. By induction hypothesis, $\varphi_2\in I_3^2$.
\end{proof}

The calculations needed to prove that the above result holds more generally
and not just for $r=3$ appear to be tedious.

In the following examples, the permanent $d$ induces a group isomorphism
$$I_3/I_3^2\cong k^{\times}/k^{\times 3}$$
 for diagonal cubic forms.

\begin{example} (i) Let $p$ be an odd prime, $\omega$ a primitive $p$th root of unity and $\rho$ a
primitive $p^{2}$-nd root of unity. Let $k$ be a subfield of the algebraic numbers $\mathbb{Q}^a$ containing $\omega$
which is maximal with respect to the property of not containing $\rho$. Then $|k^\times/k^{\times p}|=p$
and $\{1,\omega,\dots,\omega^{p-1}\}$ is a set of coset representatives for $k^\times/k^{\times p}$.
The $H$-form $\langle 1,\omega,\dots,\omega^{p-1}\rangle$ is isotropic and  $ u_{diag}(k,p)\leq p-1$
\cite[Example 3.2]{L-Y}.
\\(ii) Let $\mathbb{Q}^a$ be the algebraic numbers and $\omega$ a primitive $p$th root of unity, $p$ an odd prime.
Let $k$ be a subfield of $\mathbb{Q}^a$ which is maximal with respect to the property of not containing
$\omega$ and $\alpha$, where $\mathbb{Q}(\alpha)/\mathbb{Q}$ is a normal extension of degree $p$. Then
${\rm Gal}(k)=\mathbb{Z}_p\times \mathbb{Z}_q$ where $q$ does not divide $p-1$,
$|k^\times/k^{\times p}|=1$ is trivial and $|l^\times/l^{\times p}|=p$ for the field extension $l=k(\omega)$ with
$[l:k]=q$  \cite[Example 3.3]{L-Y}.
\end{example}

\begin{example} Take the situation of Example 2 (ii) and suppose that $r^2$ does not divide $q-1$:
Choose $H=k^\times/k^{\times r}=\{1,\xi,\dots,\xi^{r-1}\}$ maximal.
Any diagonal $H$-form $\varphi$ of degree $p$ over $k$ is isotropic and isomorphic to
$ m\times \langle 1,\xi,\dots,\xi^{p-1}\rangle$, so its dimension is a multiple of the degree $p$
and $d(\varphi)=1$.
   So here we have a very special situation: Every diagonal $H$-form is isotropic, of
    dimension a multiple of the degree, and has permanent 1.
The ideal generated by these diagonal $H$-forms contains all round universal diagonal forms over
$k=\mathbb{F}_q$, and these are all isotropic.
\end{example}

We conclude that the definition of Witt rings using $H$-equivalence given in \cite{H-P}  seems to give good results for
certain fields, if we only study  diagonal cubic forms.

 \section{Some observation on a Witt ring of separable forms and invariants of separable forms}\label{sec:5}

 Let $P^{sep}_d(k)$ denote the set of all isomorphism classes of nondegenerate separable forms of degree $d$ over $k$.
Define
$$W^{\rm sep}_d(k,H)=P^{\rm sep}_d(k)/\sim_H$$ and let
$$r^{\rm sep}_H:P^{\rm sep}_d(k)\rightarrow  W^{\rm sep}_d(k,H), \quad \Phi \rightarrow  [\Phi]$$
 denote the canonical map.
 $W^{\rm sep}_d(k,H)$ is a commutative semi-ring.

   A separable form $\Psi$ of degree $d$ over $k$ has an additive  inverse (which is again a separable form by construction)
  if and only if there are only finitely many
  elements $a_i\in H$ such that $a_i\Psi\not=\Psi$; i.e., if its stabilizer has finite index in $H$.
  Thus $W^{\rm sep}_d(k,H)$ is a ring, if $H$ is finite.

  \begin{example} (i) Let $H=\{1\}$ be trivial, then any form is an $H$-form and we obtain
   $$W_d(k,H)=W^{\rm sep}_d(k,H)=0.$$
   E.g., let $k=\mathbb{R}$. Then $|k^\times/k^{\times d}|=1$ for any odd integer $d=2l+1\geq 2$, hence
   $$W^{\rm sep}_{2l+1}(\mathbb{R},H)=0.$$
  \end{example}

 Let  $\widehat{W}^{\rm sep}_r(k)$ denote the set of separable forms in the Witt-Grothendiek ring
  $\widehat{W}_r(k)$. This set is a subring of the ring $\widehat{W}_r(k)$. 

\begin{lemma} (cf. \cite[9.1]{R}) Let $r\geq 3$. Let
$d: \widehat{W}^{\rm sep}_r(k)\rightarrow  k^\times/k^{\times r}$ be the permanent, and let $F\subset  \widehat{W}^{\rm sep}_r(k)$ be
 an ideal such that
${\rm dim}\,\varphi\equiv 0\,{\rm mod}\, r$ and $d(\varphi)=1$ for all $\varphi\in F$. Define $W_r(k,{\rm sep})=
\widehat{W}^{\rm sep}_r(k)/F$.
Let $I_r$ denote the kernel of the dimension index $dim: W^{\rm sep}_r(k)\rightarrow  \mathbb{Z}_r$. Then $d$ induces a surjective
morphism
$$d:I_r/I^2_r\rightarrow  k^\times/k^{\times r}.$$
\end{lemma}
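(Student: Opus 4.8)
The plan is to mirror the structure of Lemma \ref{le:1}, verifying four things in order: that the permanent descends to a well-defined homomorphism on the quotient $W_r(k,{\rm sep})$, that it restricts to $I_r$, that it annihilates $I_r^2$, and that the induced map on $I_r/I_r^2$ is onto. First I would observe that the permanent gives a group homomorphism $d:\widehat{W}^{\rm sep}_r(k)\to k^\times/k^{\times r}$ from the \emph{additive} group of the Witt--Grothendieck ring to the \emph{multiplicative} group $k^\times/k^{\times r}$: the rule $d(\Phi\perp\Psi)=d(\Phi)d(\Psi)$ extends to formal differences via $d([\theta]-[\Gamma])=d(\theta)d(\Gamma)^{-1}$, and this is well defined because any stabilising summand $\Omega$ in the Grothendieck equivalence $\theta_1\perp\Gamma_2\perp\Omega\cong\theta_2\perp\Gamma_1\perp\Omega$ contributes the same factor $d(\Omega)$ to both sides and cancels. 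Since every element of $F$ has permanent $1$ by hypothesis, $d$ is trivial on $F$ and factors through $W_r(k,{\rm sep})=\widehat{W}^{\rm sep}_r(k)/F$; the same reasoning makes $\dim$ factor through, so $I_r=\ker(\dim)$ is meaningful and $d$ restricts to it.

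For surjectivity I would simply exhibit preimages: for $c\in k^\times/k^{\times r}$ the diagonal form $\langle c,1,\dots,1\rangle$ of dimension $r$ lies in $I_r$ (its dimension is $r\equiv 0$) and has permanent $c$, using that the permanent of a diagonal form is the product of its entries modulo $r$th powers, exactly as recorded before Lemma \ref{le:1}. Hence $d|_{I_r}$ is already onto, and so is the induced map on $I_r/I_r^2$.

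The substantive step, and the one needing real computation, is the inclusion $I_r^2\subseteq\ker d$. Here I would first fix convenient additive generators of $I_r$: the dimension-zero elements $[\varphi]-(\dim\varphi)\langle 1\rangle$, one for each separable $\varphi$, together with the single extra generator $r\langle 1\rangle$, since $\dim$ takes values in $\mathbb{Z}_r$ rather than $\mathbb{Z}$. Writing $u=[\varphi]-a\langle 1\rangle$ and $v=[\psi]-b\langle 1\rangle$ with $a=\dim\varphi$, $b=\dim\psi$, the product expands, using that $\langle 1\rangle$ is the tensor unit, to $uv=[\varphi\otimes\psi]-a[\psi]-b[\varphi]+ab\langle 1\rangle$. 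Applying the homomorphism $d$ together with the tensor formula $d(\varphi\otimes\psi)=d(\varphi)^{\dim\psi}d(\psi)^{\dim\varphi}=d(\varphi)^{b}d(\psi)^{a}$ and $d(\langle 1\rangle)=1$ yields $d(uv)=d(\varphi)^{b}d(\psi)^{a}\,d(\psi)^{-a}d(\varphi)^{-b}=1$; the products involving $r\langle 1\rangle$ vanish even faster, since each produces an $r$th power in $k^\times/k^{\times r}$. Thus $d$ is trivial on all generators of $I_r^2$, so it descends to $I_r/I_r^2$, and with the surjectivity above this proves the claim.

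The hard part will be purely the bookkeeping in this last step: keeping the additive homomorphism $d$ and the \emph{bilinear} tensor product straight, remembering the extra generator $r\langle 1\rangle$ forced by $\dim$ landing in $\mathbb{Z}_r$, and expressing the product $uv$ in terms of honest forms before applying $d$. Once $uv$ is written that way, the cancellation is forced by the exponents $\dim\varphi,\dim\psi$ appearing in the tensor formula, so there is no genuine obstruction beyond organising the generators correctly.
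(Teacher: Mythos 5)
Your proof is correct. Note, however, that the paper itself offers no proof of this lemma at all: it is stated with the citation ``(cf.\ [9.1] of Rupprecht's thesis)'' and left there, so the only in-paper argument to compare against is the sketch surrounding Lemma~\ref{le:1} for the diagonal case, where $I_d$ is generated as a group by dimension-$d$ diagonal forms $\langle b_1,\dots,b_d\rangle$ and the single computation $d(\langle b_1,\dots,b_d\rangle\otimes\langle c_1,\dots,c_d\rangle)=1$ (exponents $\dim\equiv 0 \bmod d$ in the tensor formula) gives $I_d^2\subset\ker d$. Your argument is the correct adaptation of that sketch to the setting actually required here, which is genuinely different: $\widehat{W}^{\rm sep}_r(k)$ is a Grothendieck ring containing formal differences and non-diagonal separable forms (traces $tr_{l/k}\langle c\rangle$), so one cannot take dimension-$r$ diagonal forms as generators of $I_r$. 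Your choice of generators $[\varphi]-(\dim\varphi)\langle 1\rangle$ together with $r\langle 1\rangle$ (the latter forced by $\dim$ landing in $\mathbb{Z}_r$) is the right fix, your verification that the permanent descends both to the Grothendieck group and to the quotient by $F$ uses exactly the two hypotheses placed on $F$, and the cancellation $d(uv)=d(\varphi)^{b}d(\psi)^{a}d(\psi)^{-a}d(\varphi)^{-b}=1$ is the same mechanism as in the paper's diagonal sketch --- the exponents in the tensor formula $d(\Phi\otimes\Psi)=d(\Phi)^{\dim\Psi}d(\Psi)^{\dim\Phi}$ doing all the work --- but deployed on the correct generators. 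The surjectivity witness $\langle c,1,\dots,1\rangle$ is also what the paper's setting supplies implicitly. In short: your proof fills a gap the paper delegates to an external reference, and does so by the natural generalization of the paper's own diagonal-case reasoning.
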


 If $H$ is a finite subgroup of $k^\times/k^{\times r}$, then $P^{\rm sep}_r/\sim_H\cong W_r(k,{\rm sep})$
if the ideal $F$ is chosen to be the ideal in  $ W^{\rm sep}_r(k)$ generated by all separable $H$-forms.

Given the right choice of $r$ and $H$, is the above map is an isomorphism?
Ruppert showed that in case $r=p$ is a prime, and $k=\mathbb{F}_q$ is a finite field such that
 $q\equiv 1\,{\rm mod}\, p$ and $r^2$ does not divide $q-1$,
the group $I_p/I^2_p$ is not even finitely generated, if we choose the ideal $F$ generated by the form
$\langle 1,\xi,\dots,\xi^{p-1}\rangle$ ($\xi$ a primitive $p^{th}$ root of unity in $k$) \cite{R}.
 So one would need to cancel out a
 much bigger ideal in order to get an isomorphism here, if there exists one at all.
The reason this is so might be the fact that its generators
only consist of all the {\it diagonal} round universal forms, and no other separable ones, instead one should probably try this construction with the ideal
generated by forms of the kind $\langle 1,\xi,\dots,\xi^{p-1}\rangle$ and $\langle 1,\xi,\dots,\xi^{p-1}\rangle \otimes tr_{l/k}\langle c\rangle,$
with $c\in l^\times$ arbitrary.

\begin{example} (i) Let $k=\mathbb{C}$. Then $|k^\times/k^{\times d}|=1$ for any integer $d\geq 2$.
  Thus $H=\{1\}$ is trivial, any form over $\mathbb{C}$ is an $H$-form and
   $W^D_d(\mathbb{C},H)=0$, and $ W_d(\mathbb{C},H)=0.$
   (ii) Let $k=\mathbb{R}$. Then $|k^\times/k^{\times d}|=1$ for any odd integer $d=2l+1\geq 2$, hence
   $W^D_{2l+1}(\mathbb{R},H)=0$,    and  $ W_{2l+1}(\mathbb{R},H)=0.$
   For any even integer $d\geq 2$,
 $k^\times/k^{\times d}=\{1,-1\}$. If $d=2l$ is even  let $H=\{1,-1\}$,
  then any $H$-form is isomorphic to a form of the kind $m\times \langle1,-1\rangle$. For each diagonal form
  $\Psi$ of  degree $d$ over $k$ we obtain that $\Psi\sim_H m\times \langle 1\rangle$ or  $\Psi\sim_H n\times \langle -1\rangle$,
  for integers $m,n$, or that $\Psi\sim_H 0$. This yields
  $W^D_{2l}(\mathbb{R},H)=\mathbb{Z}.$
  \end{example}

\section{Different definitions of Witt rings using $I$-forms and $J$-forms}\label{sec:I-forms}

What happens if we use a different choice of ``hyperbolic forms'' in the definition of a Witt ring for forms of higher degree? It is certainly worth exploring this question.
We quickly sketch two possible choices, with obvious modifications for the corresponding Witt rings of diagonal forms.

\subsection{$I$-forms}

 Let $\varphi$ be a nondegenerate form of degree $d$ over $k$. Let $H= k^\times/k^{\times d}$. Here, $H$ can be finite or infinite.
 If $\varphi$ is an isotropic $H$-form 
 or, equivalently, if $\varphi$ is isotropic, round and universal, $\varphi$ is called an {\it $I$-form}.
 A form $\Phi$ is called {\it $I$-reduced} if $\Phi\cong \Psi\perp \varphi$ with an $I$-form $\varphi$
implies $\varphi=0$.

 We again obtain a kind of Witt-decomposition for forms of higher degree,
where here the role of the anisotropic kernel of the quadratic form is played by the $I$-reduced form:

\begin{theorem}
For each form $\Phi$ there is an $I$-reduced form $\Phi_I$ and an $I$-form $t_I(\Phi)$ such that
$$\Phi\cong \Phi_I \perp t_I(\Phi).$$
 For $d>2$, this decomposition is unique up to isomorphisms.
\end{theorem}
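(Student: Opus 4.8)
The plan is to mirror the $H$-reduced decomposition recorded above from \cite[1.1, 1.2]{H-P}, with ``$I$-form'' in place of ``$H$-form''. Two inputs are needed: that the $I$-forms are closed under orthogonal sums, and the Krull--Schmidt theorem for $d\geq 3$. The closure is immediate: if $\varphi_1,\varphi_2$ are $I$-forms then $\varphi_1\perp\varphi_2$ is again an $H$-form for $H=k^\times/k^{\times d}$ (the orthogonal sum of two $H$-forms is an $H$-form), and it is isotropic since a nonzero isotropic vector of $\varphi_1$ stays isotropic in the sum; hence $\varphi_1\perp\varphi_2$ is an $I$-form.

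For existence I would induct on $\dim\Phi$. If $\Phi$ is $I$-reduced, set $\Phi_I=\Phi$ and $t_I(\Phi)=0$. Otherwise write $\Phi\cong\Psi\perp\varphi_0$ with $\varphi_0$ a nonzero $I$-form, so $\dim\Psi<\dim\Phi$; the induction hypothesis gives $\Psi\cong\Psi_I\perp t_I(\Psi)$ with $\Psi_I$ $I$-reduced and $t_I(\Psi)$ an $I$-form. Then $\Phi\cong\Psi_I\perp\bigl(t_I(\Psi)\perp\varphi_0\bigr)$, and $t_I(\Psi)\perp\varphi_0$ is an $I$-form by closure, so I set $\Phi_I=\Psi_I$ and $t_I(\Phi)=t_I(\Psi)\perp\varphi_0$. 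The process terminates because the dimension of the split-off $I$-form strictly increases.

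For uniqueness (the case $d>2$) my plan is to reduce to $H$-forms and then exploit Krull--Schmidt. Since every $I$-form is in particular an $H$-form for $H=k^\times/k^{\times d}$, and since for this maximal $H$ the $H$-forms are exactly the round universal forms (equivalently $G(\varphi)=D(\varphi)=k^\times/k^{\times d}$), I would first split off the unique maximal $H$-form $t_H(\Phi)$ using the $H$-reduced decomposition, which is unique for $d>2$. Writing $t_H(\Phi)$ in the form given by Theorem~\ref{thm:1}, an orthogonal sum of indecomposable $H$-forms and blocks $\langle a_1,\dots,a_s\rangle\otimes\Psi$, one checks via the multiplicities of the complete $H$-orbits that every $I$-form summand of $\Phi$ already splits off $t_H(\Phi)$; this reduces the whole problem to the case in which $\Phi$ is itself an $H$-form, where ``$I$-reduced'' simply means ``anisotropic''.

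The key structural observation I would then use is that, for maximal $H$, every nonzero $H$-form is universal, so the orthogonal sum of any two nonzero $H$-forms represents some value together with its negative and is therefore isotropic. Combined with Theorem~\ref{thm:1}, this forces an anisotropic $H$-form to be a single indecomposable $H$-form or a single block $\langle a_1,\dots,a_s\rangle\otimes\Psi$. Matching the indecomposable components on the two sides by Krull--Schmidt then pins down the anisotropic ($I$-reduced) summand up to isomorphism, and cancelling it yields $t_I(\Phi)$ up to isomorphism as well. The main obstacle is precisely this uniqueness step: isotropy of an orthogonal sum is a global rather than a componentwise property, so $t_I(\Phi)$ cannot be read off directly from the Krull--Schmidt components; the delicate point is to convert isotropy into statements about the number of $H$-form pieces (via universality) and to carry out the resulting bookkeeping so that the anisotropic remainder is genuinely determined.
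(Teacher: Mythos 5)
Your existence argument is correct, and it is an acceptable variant of the paper's: the paper does not induct but takes the $H$-decomposition $\Phi\cong\Phi_H\perp t_H(\Phi)$ of \cite{H-P} for $H=k^\times/k^{\times d}$ and sets $(\Phi_I,t_I(\Phi))=(\Phi_H,t_H(\Phi))$ when $t_H(\Phi)$ is isotropic and $(\Phi_I,t_I(\Phi))=(\Phi,0)$ when it is anisotropic; your induction on dimension plus closure of $I$-forms under $\perp$ proves the same thing.

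The uniqueness step is a genuine gap, and the obstacle you flag at the end is not deferred bookkeeping that could be carried out: the anisotropic remainder is genuinely \emph{not} determined by $\Phi$, for exactly the reason you observe, namely that the orthogonal sum of two nonzero $H_{\rm max}$-forms is always isotropic. This allows an anisotropic $H$-form sitting next to a nonzero $I$-form to be absorbed into it. Concretely, take the situation of the remark in Section~\ref{sec:4}: $k=\mathbb{Q}_p$, $d\geq 3$ odd and prime to $p$ and $p-1$, and $\varphi=\langle a_1,\dots,a_d\rangle$ the anisotropic round universal diagonal form of dimension $d$, so that $m\times\varphi$ is an isotropic round universal form for every $m\geq 2$ \cite{Pu1}. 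Then
$$3\times\varphi\cong 0\perp(3\times\varphi)\cong \varphi\perp(2\times\varphi)$$
gives two different $I$-decompositions of the same form: $0$ and $\varphi$ are $I$-reduced (any anisotropic form is, since a nonzero isotropic orthogonal summand would yield an isotropic vector), while $3\times\varphi$ and $2\times\varphi$ are isotropic $H$-forms, hence $I$-forms; and $0\not\cong\varphi$. Krull--Schmidt cannot separate these two decompositions, because they have literally the same indecomposable constituents, so no matching-and-cancelling argument of the kind you propose can pin down $\Phi_I$. You are in good company: the paper's own proof disposes of uniqueness in one sentence (``since so is the decomposition into $H$-reduced forms and $H$-forms''), which tacitly assumes that an $I$-reduced form is $H$-reduced; this is false (the paper's own $\Phi_I=\Phi_H\perp t_H(\Phi)$ in the anisotropic case is $I$-reduced but not $H$-reduced), and the example above shows the defect is fatal rather than cosmetic. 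What your reduction genuinely does prove is uniqueness in the case that $t_H(\Phi)$ is anisotropic: then every $I$-form orthogonal summand of $\Phi$ is, by uniqueness of the $H$-decomposition, an orthogonal summand of $t_H(\Phi)$, hence anisotropic, hence zero. When $t_H(\Phi)$ is isotropic, the uniqueness assertion must first be weakened (for instance, to uniqueness of the decomposition with $\dim t_I(\Phi)$ maximal, which is the one the paper constructs) before any proof can succeed.
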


\begin{proof} For each form $\Phi$ there is an $H$-reduced form $\Phi_H$ and an $H$-form $t_H(\Phi)$ such that
$\Phi\cong \Phi_H \perp t_H(\Phi).$ For $d>2$, this decomposition is unique up to isomorphisms.
If the $H$-form $t_H(\Phi)$ is isotropic, then it is
 also an $I$-form, and the $H$-reduced form $\Phi_H$ is also $I$-reduced. We obtain $\Phi_I\cong\Phi_H$ and
$t_I(\Phi)\cong t_H(\Phi)$.
 If the $H$-form $t_H(\Phi)$ is anisotropic, then put $t_I(\Phi)=0$ and $\Phi_I=\Phi$ is already $I$-reduced.
 We obtain $\Phi_I\cong\Phi_H\perp t_H(\Phi)$ and
$t_I(\Phi)=0$.
For $d>2$, this decomposition is unique up to isomorphisms, since so is the decomposition into $H$-reduced forms
and $H$-forms.
\end{proof}

Two forms $\Phi$ and $\Psi$ are {\it $I$-equivalent}, written $\Phi \sim_I \Psi$, if there are $I$-forms
$\varphi_1$ and $\varphi_2$ such that $\Phi\perp \varphi_1\cong \Psi\perp \varphi_2$.
The equivalence relation $\sim_I$ is compatible with taking orthogonal sums and tensor products, thus the
 set of equivalence classes
$$W_d(k,I)=\{[\Phi]|\Phi \, \text{a form over}\, k\}$$
 is a commutative semiring.
More precisely, the surjective map
$r_I:P_d(k)\rightarrow W_d(k,H)$ preserves addition and multiplication, and $W_d(k,I)$ inherits the structure of
$P_d(k)$.

\begin{lemma} 
 Let $\Phi$ be a form over $k$ such
 that there are only finitely many
elements $a_i\in k^\times/k^{\times d}$ such that $a_i \Phi \not \cong \Phi$. Then
$\Phi$ has the additive inverse $\bar \Phi=\widetilde{\Phi}\perp \Phi\perp\widetilde{\Phi}$ in $W_d(k,I)$.
\end{lemma}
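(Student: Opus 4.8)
The plan is to deduce the statement from Proposition~\ref{prop:3} together with the fact that, for maximal $H$, an $H$-form is round and universal. First I would set $\Gamma := \Phi\perp\widetilde{\Phi}$. The finiteness hypothesis on $\Phi$ is exactly what Proposition~\ref{prop:3}(i) requires: it guarantees that $\widetilde{\Phi}=\perp_{i} a_i\Phi$ is a genuine finite-dimensional form and that $\Gamma$, being the orthogonal sum over the full (finite) $H$-orbit of $\Phi$, is an $H$-form. Since here $H=k^\times/k^{\times d}$ is maximal, the Remark on maximal $H$-forms shows that $\Gamma$ is round and universal; in particular $D(\Gamma)=k^\times$.

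Next I would compute $\Phi\perp\bar{\Phi}$. Up to isomorphism $\perp$ is commutative and associative, so
$$\Phi\perp\bar{\Phi}=\Phi\perp\widetilde{\Phi}\perp\Phi\perp\widetilde{\Phi}\cong\Gamma\perp\Gamma.$$
As an orthogonal sum of two $H$-forms, $\Gamma\perp\Gamma$ is again an $H$-form. To see it is isotropic I would use universality: pick $v_1$ with $\Gamma(v_1)=1$ and $v_2$ with $\Gamma(v_2)=-1$, which exist because $1,-1\in k^\times=D(\Gamma)$. Then $(v_1,v_2)\neq 0$ and $(\Gamma\perp\Gamma)(v_1,v_2)=\Gamma(v_1)+\Gamma(v_2)=0$, so $\Gamma\perp\Gamma$ is an isotropic $H$-form, i.e.\ an $I$-form.

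Finally, every $I$-form $\varphi$ satisfies $\varphi\sim_I 0$ (take $\varphi_1=\varphi$ and $\varphi_2=\varphi\perp\varphi$, both $I$-forms, in the definition of $\sim_I$), whence $\Phi\perp\bar{\Phi}\sim_I 0$ and $[\Phi]+[\bar{\Phi}]=[0]$ in $W_d(k,I)$; that is, $\bar{\Phi}$ is the additive inverse of $\Phi$. The one point that genuinely needs the tripled form $\bar{\Phi}=\widetilde{\Phi}\perp\Phi\perp\widetilde{\Phi}$, rather than the single $\widetilde{\Phi}$ that serves as inverse in $W_d(k,H)$, is that $\Gamma$ itself may well be anisotropic; the crux of the argument is that doubling a universal form always yields an isotropic one, and this is exactly what promotes the $H$-form $\Phi\perp\widetilde{\Phi}$ to an $I$-form upon passing to $\Phi\perp\bar{\Phi}$.
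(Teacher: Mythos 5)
Your proof is correct and takes essentially the same route as the paper: both arguments observe that $\Phi\perp\bar\Phi\cong(\Phi\perp\widetilde{\Phi})\perp(\Phi\perp\widetilde{\Phi})$ is an isotropic round universal form, i.e.\ an $I$-form, hence $\sim_I 0$. The only difference is that you spell out the isotropy step (a universal form represents both $1$ and $-1$, so its double represents $0$ nontrivially), which the paper merely asserts.
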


\begin{proof} Let $\Phi$ be a form over $k$,
such that there are only finitely many
elements $a_i\in k^\times/k^{\times d}$ such that $a_i \Phi \not \cong \Phi$.
The form $\Phi\perp \widetilde{\Phi}=\Phi\perp (\perp_{a_i}\Phi)$ is round and universal; i.e., an
$H$-form over $k$. 
Put $\bar \Phi=\widetilde{\Phi}\perp \Phi\perp\widetilde{\Phi}$, then
$\bar{\Phi}$ is the additive inverse of $\Phi$ in $W_d(k,I)$, since $\Phi\perp\bar \Phi=
\Phi\perp\widetilde{\Phi}\perp \Phi\perp\widetilde{\Phi}$ is an isotropic round universal form over $k$; i.e., an
$I$-form.
\end{proof}

Note that the additive inverse of a diagonal form again is a diagonal form.

\begin{corollary} Let $H= k^\times/k^{\times d}$ be finite. Then $W_d(k,I)$ is a commutative ring.
\end{corollary}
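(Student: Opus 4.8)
The plan is to deduce this corollary directly from the preceding Lemma, whose hypothesis becomes vacuous once $H$ is finite. Recall that $W_d(k,I)$ has already been established to be a commutative semiring: the relation $\sim_I$ is compatible with $\perp$ and $\otimes$, the surjection $r_I:P_d(k)\rightarrow W_d(k,I)$ preserves both operations, and so $W_d(k,I)$ inherits commutativity, associativity, distributivity, and the unit $[\langle 1\rangle]$ from $P_d(k)$, together with the additive identity given by the class of any $I$-form. Thus the only thing separating a commutative semiring from a commutative ring is the existence of additive inverses, and this is precisely what must be supplied.

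First I would observe that when $H=k^\times/k^{\times d}$ is finite, every form $\Phi$ over $k$ automatically satisfies the hypothesis of the Lemma. Indeed, the set of $a\in k^\times/k^{\times d}$ with $a\Phi\not\cong\Phi$ is a subset of the finite group $H$, hence is itself finite; there are at most $|H|$ such classes. Therefore the Lemma applies to \emph{every} form $\Phi$, yielding the additive inverse $\bar\Phi=\widetilde{\Phi}\perp\Phi\perp\widetilde{\Phi}$, where $\widetilde{\Phi}=\perp_{a_i}\Phi$ runs over the finitely many distinct nontrivial conjugates, so that $\Phi\perp\widetilde{\Phi}$ is a round universal $H$-form and $\Phi\perp\bar\Phi$ is an isotropic round universal form, i.e.\ an $I$-form, which represents $0$ in $W_d(k,I)$.

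Consequently every element $[\Phi]\in W_d(k,I)$ has an additive inverse $[\bar\Phi]$, so $(W_d(k,I),+)$ is an abelian group; combined with the semiring structure already in place, this shows $W_d(k,I)$ is a commutative ring. I do not anticipate any genuine obstacle here, since the argument is a direct specialization of the Lemma; the only point requiring a word of care is to confirm that finiteness of $H$ is exactly what guarantees the conjugate-finiteness condition for \emph{all} forms simultaneously, rather than merely for some distinguished subclass. One might also remark, for completeness, that since the inverse $\bar\Phi$ of a diagonal form is again diagonal, the same finiteness of $H$ makes the diagonal analogue $W_d^D(k,I)$ a commutative ring as well, paralleling the situation for $W_d^D(k,H)$.
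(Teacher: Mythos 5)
Your proof is correct and takes essentially the same route as the paper: both deduce the corollary directly from the preceding Lemma by observing that finiteness of $H=k^\times/k^{\times d}$ makes the conjugate-finiteness hypothesis automatic for every form $\Phi$, so each class $[\Phi]$ acquires the inverse $[\bar\Phi]$ with $\bar\Phi=\widetilde{\Phi}\perp\Phi\perp\widetilde{\Phi}$. Your additional bookkeeping about the semiring structure and the diagonal case is harmless but not needed beyond what the paper records.
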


\begin{proof} Let $\Phi$ be a form over $k$,
then there can be only finitely many
elements $a_i\in k^\times/k^{\times d}$ such that $a_i \Phi \not \cong \Phi$.
 Then
$\Phi$ has the additive inverse $\bar \Phi$ in $W_d(k,H)$.
\end{proof}

 \begin{corollary}
 (i) For every $d>1$, each equivalence class $[\Phi]$
 in $W_d(k,I)$ contains at least one $I$-reduced representative $[\Phi]=[\Phi_I]$.\\
 (ii) For each $d>2$, the $I$-reduced representative in each equivalence class is uniquely determined, so $W_d(k,I)$
 can be viewed as subset of $P_d(k)$.
 \end{corollary}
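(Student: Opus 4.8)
The plan is to derive both parts directly from the $I$-Witt decomposition established in the preceding theorem, mirroring the way the analogous statements for $H$-reduced forms are obtained in \cite[1.3, 1.4]{H-P}. The only genuinely new ingredients are that the orthogonal sum of two $I$-forms is again an $I$-form (it is an $H$-form by Remark~1(ii), and it is isotropic because one of the summands already is), and that $0 \perp \varphi$ never disturbs roundness or universality.

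\emph{Part (i).} For any form $\Phi$ the theorem gives $\Phi \cong \Phi_I \perp t_I(\Phi)$ with $\Phi_I$ an $I$-reduced form and $t_I(\Phi)$ an $I$-form (this existence statement holds for all $d>1$). I would show $\Phi \sim_I \Phi_I$, whence $[\Phi]=[\Phi_I]$ contains the $I$-reduced representative $\Phi_I$. If $t_I(\Phi)=0$, then $\Phi \cong \Phi_I$ is itself $I$-reduced and there is nothing to prove. If $t_I(\Phi)\neq 0$, I adjoin it to both sides: since $t_I(\Phi)\perp t_I(\Phi)$ is again an $I$-form, the isomorphism $\Phi \perp t_I(\Phi) \cong \Phi_I \perp \bigl(t_I(\Phi)\perp t_I(\Phi)\bigr)$ exhibits the $I$-forms $\varphi_1 = t_I(\Phi)$ and $\varphi_2 = t_I(\Phi)\perp t_I(\Phi)$ witnessing $\Phi \sim_I \Phi_I$.

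\emph{Part (ii).} For uniqueness when $d>2$, suppose $\Phi_I$ and $\Psi_I$ are both $I$-reduced and $\Phi_I \sim_I \Psi_I$, so that there are $I$-forms $\varphi_1,\varphi_2$ with $\Phi_I \perp \varphi_1 \cong \Psi_I \perp \varphi_2$. The key observation is that the left-hand side is already presented as an $I$-reduced form orthogonal to an $I$-form, and likewise the right-hand side; by the uniqueness clause of the $I$-Witt decomposition, the $I$-reduced component of any form is determined up to isomorphism. Applying this to both sides of the displayed isomorphism then forces $\Phi_I \cong \Psi_I$. Consequently each class has a unique $I$-reduced representative and $W_d(k,I)$ may be regarded as a subset of $P_d(k)$.

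The main point requiring care is the identification, in part (ii), of $\Phi_I$ as the $I$-reduced component of $\Phi_I \perp \varphi_1$: one must know that adjoining an $I$-form to an $I$-reduced form cannot produce a different splitting into reduced and $I$-parts. This is precisely what the uniqueness of the decomposition guarantees, so no separate argument is needed beyond invoking the theorem, whose uniqueness clause rests on the Krull--Schmidt theorem and hence on $d\geq 3$. This is the sole place where the hypothesis $d>2$ (rather than merely $d>1$) is used, and the only other bookkeeping is the degenerate case $t_I(\Phi)=0$ in part (i).
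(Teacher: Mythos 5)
Your proof is correct and takes essentially the same route as the paper, which states this corollary without separate proof as an immediate consequence of the preceding $I$-decomposition theorem (mirroring \cite[1.3, 1.4]{H-P}): existence of an $I$-reduced representative follows from the decomposition $\Phi\cong\Phi_I\perp t_I(\Phi)$, and uniqueness for $d>2$ from the uniqueness clause of that theorem. Your doubling trick $\varphi_1=t_I(\Phi)$, $\varphi_2=t_I(\Phi)\perp t_I(\Phi)$ is a minor refinement that avoids having to count the zero form as an $I$-form, but it does not change the substance of the argument.
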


 Let $r_I:P_d(k)\rightarrow W_d(k,I),$ $\Phi \mapsto [\Phi]$.
 If two forms are $I$-equivalent, then they also are $H$-equivalent, since by definition any $I$-form
is also an $H$-form. Thus the canonical semiring homomorphism
$${\rm can}: W_d(k,I)\rightarrow W_d(k,H), \quad [\Phi]\mapsto [\Phi]$$
 is surjective, since
 $r_H={\rm can}\circ r_I$, i.e $r_H:P_d(k)\rightarrow W_d(k,H)$ is the same as  $r_J:P_d(k)\rightarrow W_d(k,J)\to
 W_d(k,H)$.
 If $H= k^\times/k^{\times d}$ is finite, this is even a surjective ring homomorphism.

Note that
$W_2(k,I)$ is the classical Witt ring of quadratic forms: $I$-equivalence is the same as Witt equivalence, and
the $I$-forms over $k$ are exactly the hyperbolic quadratic forms over $k$. This makes this approach look like a canonical generalization of the classical Witt ring.

Another advantage is that we can  define trace maps in a sensible way.
This was possible for $W_d(k,H)$ only for certain groups $H$:
Let $l/k$ be a finite field extension and $s:l\rightarrow k$ a non-zero $k$-linear map.
For quadratic forms, $s_*$ induces a homomorphism of the classical Witt groups
$W(l)\rightarrow W(k),$
because given a hyperbolic quadratic form $q$ over $l$, the quadratic form $s_*q$ also is hyperbolic.

For $d> 2$, it is easy to check that given any $I$-form $\varphi$ over $l$, the form $s_*(\varphi)$ also is an $I$-form,
provided that the map $s:l\to k$ is surjective.
Thus, for surjective $s:l\to k$, $s_*$ induces a homomorphism of the semigroups
$$W_d(l,I)\rightarrow W_d(k,I).$$
Note that if $l$ is a separable field extension of $k$ and ${\rm char}\,k=0$, or if ${\rm char}\,k=p$ is prime
to $[l:k]$, then the trace $tr_{l/k}:l\rightarrow k$ is surjective. 

We do not see, however, that there is a canonical way to define a base change homomorphism in this set-up.

 \subsection{$J$-forms}

Quadratic hyperbolic forms over $k$  are quadratic forms which are generated
as orthogonal sum of forms
of the kind $\Phi\otimes \varphi$, where $\Phi$ is any quadratic form over $k$ and $\varphi$ the norm
 of a split composition algebra.  Looking at
 Schafer's classification of forms permitting composition \cite{S1}, already for $d=3$
 the forms permitting composition (even if we exclude the ones permitting Jordan composition here) have dimensions
 $1,2,3,5$ or $9$; i.e., the dimension is not a multiple of the degree.
 One way to rectify this is to only look at those forms
 with exactly that as an additional requirement, to be able to define something like an invariant induced by the dimension on the
 Witt ring later on. This motivates the following definition:

\begin{definition}  Let $\varphi$ be a nondegenerate form of degree $d$ over $k$. Then
$\varphi$ is called a {\it $J$-form}, if $\varphi$ is an $I$-form, which can be written as orthogonal sum of forms
of the kind $\Phi\otimes \varphi$, where $\Phi$ is any form and $\varphi$ an isotropic nondegenerate form which permits
composition of dimension not 5.
 A nondegenerate form $\Phi$ of degree $d$ over $k$
 is called {\it $J$-reduced} if $\Phi\cong \Psi\perp \varphi$
 with a $J$-form $\varphi$ implies $\varphi=0$.
\end{definition}

\begin{theorem}
For each form $\Phi$ there is a $J$-reduced form $\Phi_J$ and a $J$-form $t_J(\Phi)$ such that
$$\Phi\cong \Phi_J \perp t_J(\Phi).$$
 For $d>2$, this decomposition is unique up to isomorphisms.
\end{theorem}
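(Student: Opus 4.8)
The plan is to follow the template of the decomposition theorem for $I$-forms proved just above, but to replace its dichotomy ``isotropic versus anisotropic'' by a maximality argument, because being a $J$-form is no longer a single global property of a form: it is the property of admitting a decomposition into tensor products with isotropic composition forms. First I would record that the class of $J$-forms is closed under orthogonal sums. Indeed, if $\psi_1=\perp_i(\Phi_i\otimes\omega_i)$ and $\psi_2=\perp_j(\Phi_j'\otimes\omega_j')$ are $J$-forms, then $\psi_1\perp\psi_2$ is again an orthogonal sum of forms of the required shape; and it is again an $I$-form, since the orthogonal sum of two $I$-forms is isotropic (it contains an isotropic summand) and is an $H_{\rm max}$-form because the orthogonal sum of two $H$-forms is an $H$-form, hence it is round, universal and isotropic.

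Existence of the decomposition then follows by a maximality argument. Among all splittings $\Phi\cong\Psi\perp\omega$ with $\omega$ a $J$-form I would choose one for which $\dim\omega$ is largest, which is possible since $\dim\omega\le\dim\Phi$, and set $t_J(\Phi)=\omega$, $\Phi_J=\Psi$. If $\Phi_J$ were not $J$-reduced it would contain a nonzero $J$-form summand $\omega'$, say $\Phi_J\cong\Psi'\perp\omega'$; then $\Phi\cong\Psi'\perp(\omega'\perp\omega)$ exhibits $\omega'\perp\omega$ as a $J$-form summand of strictly larger dimension, contradicting maximality. Hence $\Phi_J$ is $J$-reduced.

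For uniqueness when $d>2$ I would first reduce to the $I$-form part. Since every $J$-form is an $I$-form and $\Phi_I$ admits no $I$-form summand, any $J$-form summand of $\Phi$ is isomorphic to a summand of $t_I(\Phi)$: if $\Phi\cong\Psi\perp\omega$ with $\omega$ a $J$-form, then splitting $\Psi\cong\Psi_I\perp t_I(\Psi)$ gives an $I$-decomposition $\Phi\cong\Psi_I\perp(t_I(\Psi)\perp\omega)$, whence by uniqueness of the $I$-decomposition $t_I(\Phi)\cong t_I(\Psi)\perp\omega$. Thus the maximal $J$-form summand of $\Phi$ coincides with that of the $I$-form $t_I(\Phi)$, and it suffices to show that an $I$-form $W$ decomposes uniquely as a maximal $J$-form orthogonal to a $J$-reduced form. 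Here I would invoke the Krull--Schmidt theorem for $d\ge 3$, write $W$ as an orthogonal sum of indecomposables, and compare the indecomposable components of two competing $J$-parts.

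The main obstacle is precisely this last step. Unlike an $H$- or $I$-form, a $J$-form is \emph{not} in general an orthogonal sum of indecomposable $J$-forms: its building blocks $\Phi_i\otimes\omega_i$ may themselves split into indecomposables, and an indecomposable arising this way could a priori coincide, as an isomorphism class, with a component of the $J$-reduced part. The crux is therefore to show that the multiset of indecomposable components which can be assembled into a $J$-form summand is canonically determined by $W$, so that any two maximal $J$-form summands share the same indecomposable components and are isomorphic by Krull--Schmidt. I expect this to require a closer analysis of which indecomposables can occur in a tensor product $\Phi\otimes\omega$ with $\omega$ an isotropic form permitting composition of dimension $\ne 5$, drawing on Schafer's classification, in order to exclude any ambiguous sharing of components between the $J$-part and the $J$-reduced part.
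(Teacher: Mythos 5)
Your existence argument is fine: $J$-forms are closed under orthogonal sums, so picking a $J$-form summand $\omega$ of maximal dimension forces the complement to be $J$-reduced. But the proposal does not prove the theorem, because the uniqueness half --- the only part for which the hypothesis $d>2$ is even needed --- is left unproved. You correctly reduce to showing that an $I$-form admits a unique maximal $J$-form summand, and then you state yourself that the decisive step (that the multiset of indecomposable components assemblable into a $J$-form is canonically determined, so that no component can be ``shared'' between the $J$-part and the $J$-reduced part) is an obstacle you expect to need Schafer's classification to overcome. That is a description of the difficulty, not an argument; as it stands the uniqueness claim, which is the substance of the theorem, has no proof.

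The paper avoids your obstacle entirely by taking a different route: it never performs a maximality or Krull--Schmidt analysis on $J$-forms. Instead it starts from the already-established $I$-decomposition $\Phi\cong\Phi_I\perp t_I(\Phi)$ and makes a dichotomy. If $t_I(\Phi)$ is itself a $J$-form, set $\Phi_J=\Phi_I$ and $t_J(\Phi)=t_I(\Phi)$; since every $J$-form is an $I$-form, the $I$-reduced form $\Phi_I$ is automatically $J$-reduced. If $t_I(\Phi)$ is not a $J$-form, set $t_J(\Phi)=0$ and $\Phi_J=\Phi$. Uniqueness for $d>2$ is then inherited directly from the uniqueness of the $I$-decomposition. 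So where you try to build the $J$-part from below (out of indecomposables), the paper carves it out from above (as all or nothing of the $I$-part), and the Krull--Schmidt bookkeeping you flag as the crux simply never arises. Your worry is not frivolous --- a form whose $I$-part fails to be a $J$-form could a priori still contain a smaller nonzero $J$-form summand, and this is exactly the case the paper's dichotomy disposes of tersely --- but identifying a genuine subtlety is not the same as resolving it, and your proof stops precisely there.
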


\begin{proof} For each form $\Phi$ there is an $I$-reduced form $\Phi_I$ and an $I$-form $t_I(\Phi)$ such that
$\Phi\cong \Phi_I \perp t_I(\Phi).$ For $d>2$, this decomposition is unique up to isomorphisms.
If the $I$-form $t_I(\Phi)$ is  of the desired type; i.e., an orthogonal sum
of forms
of the kind $\Phi\otimes \varphi$, where $\Phi$ is any form and $\varphi$ an isotropic
nondegenerate form which permits either composition or Jordan composition, then it is
 also a $J$-form, and the $I$-reduced form $\Phi_I$ is also $J$-reduced. We obtain $\Phi_J\cong\Phi_I$ and
$t_J(\Phi)\cong t_I(\Phi)$.
 If the $I$-form $t_I(\Phi)$ is not of the above type, then put $t_J(\Phi)=0$ and $\Phi_J=\Phi$ is already $J$-reduced.
 We obtain $\Phi_J\cong\Phi_I\perp t_I(\Phi)$ and
$t_J(\Phi)=0$.
For $d>2$, this decomposition is unique up to isomorphisms, since so is the decomposition into $I$-reduced forms
and $I$-forms.
\end{proof}

Two forms $\Phi$ and $\Psi$ are {\it $J$-equivalent}, written $\Phi \sim_J \Psi$, if there are $J$-forms
$\varphi_1$ and $\varphi_2$ such that $\Phi\perp \varphi_1\cong \Psi\perp \varphi_2$.
The equivalence relation $\sim_J$ is compatible with taking orthogonal sums and tensor products, thus the
 set of equivalence classes
$$W_d(k,J)=\{[\Phi]|\Phi \, \text{a form over}\, k\}$$
 becomes a commutative semiring.
 However, here it seems impossible to construct the
additive inverse of a given element.

 \begin{corollary}
  (i) For every $d>1$, each equivalence class $[\Phi]$
 in $W_d(k,J)$ contains at least one $J$-reduced representative $[\Phi]=[\Phi_J]$.
 \\
 (ii) For each $d>2$, the $J$-reduced representative in each equivalence class is uniquely determined, so $W_d(k,J)$
 can be viewed as subset of $P_d(k)$.
 \end{corollary}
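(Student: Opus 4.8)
The plan is to deduce both parts directly from the Witt-type decomposition theorem just established, mirroring the arguments already used for $H$-forms and $I$-forms. Part (i) is purely an existence statement and needs only the existence half of the decomposition, so it will hold for every $d>1$; part (ii) is a uniqueness statement and will exploit the uniqueness half of the decomposition, which is precisely where the hypothesis $d>2$ enters.

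For (i), I would begin from the decomposition $\Phi\cong \Phi_J\perp t_J(\Phi)$ supplied by the theorem, where $\Phi_J$ is $J$-reduced and $t_J(\Phi)$ is a $J$-form. Taking the trivial $J$-form $0$ on the left, the isomorphism $\Phi\perp 0\cong \Phi_J\perp t_J(\Phi)$ witnesses $\Phi\sim_J\Phi_J$ straight from the definition of $J$-equivalence. Hence $[\Phi]=[\Phi_J]$ in $W_d(k,J)$, exhibiting a $J$-reduced representative of the class. Since only existence of the decomposition was used, this is valid for all $d>1$.

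For (ii), assume $d>2$ and suppose $\Phi_J$ and $\Psi_J$ are $J$-reduced forms in the same class, so $\Phi_J\sim_J\Psi_J$; by definition there are $J$-forms $\varphi_1,\varphi_2$ with $\Phi_J\perp\varphi_1\cong\Psi_J\perp\varphi_2$. The crucial point is that for $d>2$ the decomposition of the theorem is unique, so $\Theta\mapsto\Theta_J$ is a well-defined invariant of the isomorphism class of $\Theta$; moreover, whenever $\alpha$ is $J$-reduced and $\beta$ is a $J$-form, the form $\alpha\perp\beta$ is itself a decomposition of the prescribed shape, whence $(\alpha\perp\beta)_J\cong\alpha$ by uniqueness. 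Applying this to both sides yields $\Phi_J\cong(\Phi_J\perp\varphi_1)_J\cong(\Psi_J\perp\varphi_2)_J\cong\Psi_J$, the middle isomorphism holding because isomorphic forms have isomorphic $J$-reduced parts. Thus the $J$-reduced representative of each class is unique up to isomorphism, and $W_d(k,J)$ embeds into $P_d(k)$ as the set of $J$-reduced forms.

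The argument is routine once the decomposition theorem is available; the only step requiring care is the identity $(\alpha\perp\beta)_J\cong\alpha$ for $J$-reduced $\alpha$ and a $J$-form $\beta$. I expect this to be the main (mild) obstacle: one must be sure that splitting off $\beta$ cannot leave a residual $J$-form hidden inside $\alpha$. This, however, is immediate from the uniqueness clause of the theorem, and hence from $d>2$, rather than from any new computation.
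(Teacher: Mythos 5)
Your proof is correct and follows exactly the route the paper intends: the paper states this corollary without a separate proof, as an immediate consequence of the preceding decomposition theorem $\Phi\cong\Phi_J\perp t_J(\Phi)$, with existence giving (i) and the $d>2$ uniqueness clause giving (ii). Your explicit key step $(\alpha\perp\beta)_J\cong\alpha$ for $\alpha$ $J$-reduced and $\beta$ a $J$-form is precisely that uniqueness clause, so there is no gap and nothing genuinely different from the paper's (implicit) argument.
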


 Let $r_J:P_d(k)\rightarrow W_d(k,J),$ $ \Phi \mapsto [\Phi]$.
 Obviously, if two forms are $J$-equivalent, then they also are $I$-equivalent, since by definition any $J$-form
is also an $I$-form.  We obtain that the canonical semiring homomorphism
$${\rm can}: W_d(k,J)\rightarrow W_d(k,I), \quad [\Phi]\mapsto [\Phi]$$
 is surjective, since again
 $r_I={\rm can}\circ r_J$, i.e $r_I:P_d(k)\rightarrow W_d(k,I)$ is the same as  $r_J:P_d(k)\rightarrow W_d(k,J)\to
 W_d(k,I)$.

One advantage of the definition of $W_d(k,J)$ over $W_d(k,I)$ is that  we can define a canonical
base change homomorphism
$$r^* : W_d(k,J)\rightarrow W_d(l,J)$$
for each field extension $l$ over $k$, since $J$-forms stay $J$-forms under arbitrary field extensions.
 Moreover, again
$W_2(k,J)$ becomes the classical Witt ring of quadratic forms: $J$-equivalence
is the same as Witt equivalence here,
the quadratic $J$-forms over $k$ are exactly the hyperbolic quadratic forms over $k$.

Thus there are canonical surjective semigroup homomorphisms
$$W_d(k,J) \rightarrow W_d(k,I)\rightarrow W_d(k,H).$$

Let $l/k$ be a field extension.  For each surjective non-zero $s\in \Hom_k (l,k)$, the map
$$r^*:W_d(k,J)\rightarrow W_d(l,J), \quad \Phi\mapsto \Phi\otimes l$$
is a semiring homomorphism and
$$s_*: W_d(l,I)\rightarrow W_d(k,I),\quad \Gamma\mapsto s(\Gamma)$$
is a homomorphism.
The composite map
$$s_*\,{\rm can}\,r^*:W_d(k,J)\rightarrow W_d(l,J) \rightarrow W_d(l,I) \rightarrow  W_d(k,I)$$
is given by multiplication with $s_*\langle 1\rangle$, since we know that
$s_*(\Psi_l)\cong \Psi \otimes_k s_*(\langle 1\rangle).$

\begin{corollary}
 The image of $s_*$  in $W_d(k,I)$ is a set closed under multiplication
which is independent of the choice of the non-zero surjective $k$-linear map $s\in \Hom_k(l,k)$.
\end{corollary}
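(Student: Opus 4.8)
The plan is to transcribe the classical argument for the Witt--Grothendieck ring (the proof of \cite[5.7, p.~48]{S} cited in the preliminaries) to the semiring $W_d(k,I)$. Two ingredients are needed: a projection formula exhibiting $\mathrm{Im}(s_*)$ as an ideal-like subset, and the fact that any two non-zero $k$-linear functionals on $l$ differ by multiplication by a scalar in $l^\times$.

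First I would establish closure under multiplication. Recall from the preliminaries the canonical isometry $s_*(\Psi_l\otimes_l\Gamma)\cong\Psi\otimes_k s_*(\Gamma)$ for a form $\Psi$ over $k$ and a form $\Gamma$ over $l$. Since $\sim_I$ is compatible with tensor products and $s_*$ is well defined on $I$-equivalence classes for surjective $s$, this identity descends to a relation $[\Psi]\cdot s_*([\Gamma])=s_*([\Psi_l\otimes_l\Gamma])$ in $W_d(k,I)$. Hence $W_d(k,I)\cdot\mathrm{Im}(s_*)\subseteq\mathrm{Im}(s_*)$; in particular, choosing a representative $\Psi$ with $[\Psi]=s_*([\Gamma'])$ shows that $s_*([\Gamma'])\cdot s_*([\Gamma])=s_*([\Psi_l\otimes_l\Gamma])\in\mathrm{Im}(s_*)$, so the image is closed under multiplication.

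For independence of the choice of $s$, I would use that $\Hom_k(l,k)$ is a free $l$-module of rank $1$ under the action $(c\cdot s)(x)=s(cx)$. Thus any non-zero $s'\in\Hom_k(l,k)$ can be written $s'(x)=s(cx)$ for a unique $c\in l^\times$; since $c$ is a unit, $x\mapsto cx$ is a bijection of $l$, so $s'$ is surjective exactly when $s$ is. A direct computation on $d$-linear forms then gives $s'_*(\Gamma)=s_*(c\Gamma)=s_*(\langle c\rangle\otimes_l\Gamma)$, whence $s'_*([\Gamma])=s_*([\langle c\rangle\otimes_l\Gamma])\in\mathrm{Im}(s_*)$ and so $\mathrm{Im}(s'_*)\subseteq\mathrm{Im}(s_*)$. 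Replacing $c$ by $c^{-1}$ yields the reverse inclusion, giving $\mathrm{Im}(s'_*)=\mathrm{Im}(s_*)$.

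The main obstacle is not the algebra but the bookkeeping of well-definedness over a semiring: since $W_d(k,I)$ need not have additive inverses, one cannot invoke the word \emph{ideal} or quotient arguments freely, and every identity above must be checked to descend from forms to $I$-equivalence classes. The two facts that make this go through are that $\sim_I$ respects $\perp$ and $\otimes$ and that $s_*$ carries $I$-forms to $I$-forms whenever $s$ is surjective; granting these, both the projection formula and the scalar-change identity $s'_*=s_*\circ(\langle c\rangle\otimes-)$ pass to classes verbatim, and the proof reduces to the two short computations indicated above.
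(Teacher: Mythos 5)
Your proof is correct and follows essentially the same route as the paper: the paper's entire proof of this corollary is a citation to Scharlau's classical argument (``For the proof, see [5.9, p.~48][S], it works here as well''), and that argument is exactly what you have transcribed --- the projection formula $s_*(\Psi_l\otimes_l\Gamma)\cong\Psi\otimes_k s_*(\Gamma)$ for closure under multiplication, and the fact that $\Hom_k(l,k)$ is free of rank one over $l$ (so $s'=s(c\,\cdot)$ with $c\in l^\times$, giving $s'_*(\Gamma)=s_*(\langle c\rangle\otimes_l\Gamma)$) for independence of $s$. Your explicit checks that $c\in l^\times$ preserves surjectivity of $s$ and that every identity descends to $I$-equivalence classes are precisely the details hidden in the paper's phrase ``it works here as well.''
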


For the proof, see \cite[5.9, p.~48]{S}, it works here as well.

By the above result the kernel of $r^*$ must be contained in the annihilator of
$s_*\langle 1\rangle$.
(Assume $r^*(\Psi)\sim_J 0$; i.e., $r^*(\Psi)$ is a $J$-form, then it is also an $I$-form and we get $\Psi \otimes_k l$
is round isotropic universal, then
$\Psi \otimes s_*\langle 1\rangle \cong s_*(r^*(\Psi))$ is an $I$-form as well. I.e.,
$\Psi \otimes s_*\langle 1\rangle \sim_I 0$.)

Note, however, that unlike in the degree $2$ case, we cannot expect results as for instance given in  \cite[5.9, p.~49]{S},
since the form $s_*\langle 1\rangle$ is isotropic but indecomposable for $d>3$,
 and hence must be either an $I$-form (in this case the annihilator would be the whole ring and we would get no new information)
 or is $I$-reduced. In the latter case this would only imply that for each form $\Psi$ in the kernel of
 $r^*:W_d(k,J)\rightarrow W_d(l,J), \, \Phi\mapsto \Phi\otimes l$, that means for each form $\Psi$ where $\Psi\otimes_k l$
 becomes a direct sum of forms of the type $\Phi\otimes \varphi$, where $\Phi$ is any form and $\varphi$ an isotropic
nondegenerate form which permits composition, we know that
  $\Psi \otimes s_*\langle 1\rangle $ must be a round universal form.
\\\\
\emph{Acknowledgement:} Parts of this paper were written while the author visited the University of Ottawa, partially supported by the Centre de Recherches Math\'ematiques and Monica Nevins' NSERC Discovery Grant RGPIN-2020-05020. She would like to thank the Department of Mathematics and Statistics for its hospitality. The author would alos like V. Astier for his helpful comments and suggestions on a previous version of the paper.


\begin{thebibliography}{[B-C-R]}

\bibitem{H} D. K. Harrison,  {\it A Grothendieck ring of higher degree forms}. J. Algebra 35 (1975), 123-138.

\bibitem{H-P} D. K. Harrison, B. Pareigis, {\it Witt rings of
higher degree forms}, Comm. Alg. 16 (6) (1988), 1275-1313.

\bibitem{HLYZ}
H.-L. Huang, H. Lu, Y. Ye, C. Zhang, \emph{Diagonalizable higher degree forms and symmetric tensors.}
Linear Algebra Appl. 613 (2021), 151–169.

\bibitem{HLHZ} H.-L. Huang, H. Lu, Y. Huajun, C. Zhang,
\emph{On centres and direct sum decompositions of higher degree forms.}
Linear Multilinear Algebra 70 (22) (2022), 7290-7306.


\bibitem{K1} A. Keet, \emph{Decomposition of a higher degree form.} Comm. Algebra 30 (10) (2002), 4945–4963.

\bibitem{K2} A. Keet, \emph{Higher degree hyperbolic forms.} Quaest Math. 16 (1993), 413-442.


\bibitem{L-Y}
D. B. Leep, C. C. Yeomans,   {\it Quintic forms over $p$-adic fields}, J. Number Theory 57(2)(1996), 2031-2041.


\bibitem{OR1} M. O'Ryan, \emph{Products of higher degree forms}.
Linear Multilinear Algebra 63 (1) (2015), 34–45.

\bibitem{O} M. Orzech,  {\it Forms of low degree in finite fields}. Bull.
Austral. Math. Soc. 30(1) (1984), 45-58.

\bibitem{Pu1} S. Pumpl\"un,   {\it Round forms of higher degree}. Results in Mathematics 63 (1-2) (2008), 1-18.

\bibitem{Pu2} S. Pumpl\"un,   {\it Some classes of multiplicative forms of higher degree}.
Comm. Alg. 37 (2) (2009), 609-629.

\bibitem{Pu3}  S. Pumpl\"un, {\it Indecomposable forms of higher degree}.
 Math. Z. 253 (2) (2006), 347-360.

\bibitem{Pu5}  S. Pumpl\"un,  \emph{Diagonal forms of higher degree over function fields of $p$-adic curves.}
Int. J. Number Theory 16 (1) (2020),  161–172.


\bibitem{R} C. Rupprecht,  {\it Cohomological invariants for higher
degree forms}. PhD Thesis, Universit\"at Regensburg, 2003.

\bibitem{Sax} N. Saxena, \emph{On the centers of higher degree forms}. Unpublished article, 2005.
\\ \verb#www.math.uni-bonn.de/people/saxena/papers/laa05.pdf#


\bibitem{S1} R. D.~Schafer, ``An Introduction to Nonassociative Algebras.'' Dover Publ. Inc., New York, 1995.

\bibitem{S1} Schafer, R.D., {\it Forms permitting composition}, Adv.
Math. 4, 127-148.

\bibitem{S} W. Scharlau,  {\it``Quadratic and Hermitian Forms''}.
Springer-Verlag, Berlin-Heidelberg-New York, 1985.

\bibitem{SOR1} D. B. Shapiro, M. O'Ryan, \emph{Centers of higher degree trace forms.}
J. Pure Appl. Algebra 217 (12) (2013),  2263–2273.

\bibitem{SOR2} D. B. Shapiro, M. O'Ryan, \emph{Centers of higher degree forms.}
Linear Algebra Appl. 371 (2003), 301–314.

\end{thebibliography}
\end{document}